\documentclass[11pt,twoside,reqno]{amsart}
\usepackage{amsmath}
\usepackage{amstext}
\usepackage{amssymb}
\usepackage{epsfig}
\usepackage{graphicx}
\usepackage{mathrsfs}
\usepackage{cancel}
\usepackage[parfill]{parskip}
\usepackage[german]{varioref}
\usepackage[all]{xy}
\usepackage{color}
\allowdisplaybreaks
\textwidth 18cm
\textheight 23cm
\oddsidemargin 0.2cm
\evensidemargin 1.2cm
\calclayout
\setcounter{page}{1}
\makeatletter
\renewcommand{\@seccntformat}[1]{\bf\csname the#1\endcsname.}
\renewcommand{\section}{\@startsection{section}{1}
  \z@{.7\linespacing\@plus\linespacing}{.5\linespacing}
  {\normalfont\upshape\bfseries\centering}}
\renewcommand{\@biblabel}[1]{\@ifnotempty{#1}{#1.}}
\makeatother

\theoremstyle{plain}
\newtheorem{thm}{Theorem}[section]

\newtheorem{proposition}[thm]{Proposition}
\newtheorem{cor}[thm]{Corollary}

\theoremstyle{definition}
\newtheorem{ex}[thm]{Example}
\newtheorem{definition}[thm]{Definition}


\def\>{\succ}
\def\<{\prec}

\def\b{\beta}

\begin{document}

\title[Splitting of operations for Hom-diassociative and Hom-triassociative Algebras]{Splitting of operations for Hom-diassociative Algebras and Hom-triassociative Algebras}

\author{Abdelkader Hamdouni\textsuperscript{1}, Imed Basdouri \textsuperscript{2}, Mariem Jendoubi\textsuperscript{3}, Ahmed Zahari Abdou\textsuperscript{4}}

\address{\textsuperscript{1} University of Carthage· Higher Institute of Sciences and Technology of Environment of Borj Cedria (ISTEUB),  Tunisia}
\address{\textsuperscript{2} Universit\'e de Gafsa, Facultté des sciences de Gafsa, Tunis}
\address{\textsuperscript{3}D\'epartement de Math\'ematiques, Facult\'e des Sciences, Universit\'e de Sfax, Sfax, Tunisia}
\address{\textsuperscript{4}IRIMAS - D\'epartement de Math\'ematiques, Facult\'e des Sciences, Universit\'e de Haute Alsace, Mulhouse, France}

\email{\textsuperscript{1}abdelkader.hamdouni@isimg.rnu.tn}
\email{\textsuperscript{2}basdourimed@yahoo.fr}
\email{\textsuperscript{3}mariemj090@gmail.com}
\email{\textsuperscript{4}abdou-damdji.ahmed-zahari@uha.fr}

\keywords{Hom-dendriform algebra; Hom-Diassociative algebra; Hom-quadri-dendriform algebra; Hom-triassociative algebra; Hom-six-dendriform algebra, classification}
\subjclass[2010]{17A30\and 17A36\and 17B56\and 16W25\and 16S80\and 17D25.}

\date{\today}

	\begin{abstract}  
     Hom-quadri dendriform algebras and Hom-six-dendriform agebras are introduced and studied which is a splitting of a Hom-diassociative and Hom-triassociative algebras, respectively. Moreover we explore the connections be
tween these categories of Hom-algebras. Finally  We elaborate a classification of Hom-quadri-dendriform algebra in low dimensional.
  \end{abstract}

\maketitle 

\section{Introduction}\label{introduction}
The concept of diassociative algebras was first introduced by Loday in \cite{M2008}  inspired by his research on periodicity phenomena in algebraic K-theory and his study of Leibniz algebras.
In \cite{LR}  Loday and Ronco demonstrated that the family of chain modules over standard simplices can be endowed with an operad structure. The algebras over this operad are known as triassociative algebras  characterized by three binary operations satisfying eleven defining identities. In the same work  they introduced tridendriform algebras and established that triassociative algebras are Koszul dual to tridendriform algebras \cite{E,EMP,ZGM}.
Inspired by Loday’s work \cite{L} and Loday and Ronco \cite{LR}.  Wen Teng \cite{WenTeng} introduced new algebraic structures quadri-dendriform algebras and six-dendriform algebras  as natural splittings of diassociative and triassociative algebras  respectively.
Dendriform algebras  also introduced by Loday in \cite{Basdouri1,MBZ1,Basdouri2,LR,AM}  decompose the associative product into two binary operations such that their sum defines an associative operation. These algebras are intimately connected to diassociative algebras via Koszul duality  where the Koszul dual of the operad associated to diassociative algebras is precisely the operad of dendriform algebras.
Dendriform algebras appear in several key areas of mathematics including algebraic K-theory,  combinatorics (notably in the study of trees), cohomology  and deformation theory
In recent years  considerable attention has been devoted to Hom-type algebras  due in part to their potential to provide a unified framework for constructing various types of natural deformations  which are of interest in both mathematics and mathematical physics.
Hom-dendriform algebras were introduced in \cite{D0, D1, DS, MS,MAZ, Y} as twisted versions of dendriform algebras and as natural splittings of Hom-associative algebras. 
The notions of Hom-associative, Hom-diassociative  and Hom-triassociative algebras  which generalize classical associative structures by twisting the associativity condition through a linear map  were developed in \cite{D1,  MBZ, AZ, ZB, Zahari2025}.
Motivated by Loday’s  Ronco and Wen Teng \cite{L, LR, WenTeng}  We introduce new algebraic structure Hom-quadri-dendriform algebras and Hom-six-dendriform algebras.
 The purpose of this paper is to introduce the notion of Hom-quadri
dendriform algebras as a splitting of di-associative algebras and 
 the notion of Hom-six-dendriform algebras as a splitting of tri-associative algebras. use relative averaging operators on Hom- algebras  as a framework to construct and study these new Hom-algebraic structures.The idea of an averaging operator was first implicitly introduced by Reynolds in 1895. Throughout the twentieth century  these operators were mainly investigated within the algebra of functions defined on a given space. In his doctoral dissertation  In \cite{AO} Cao examined averaging operators from both algebraic and combinatorial viewpoints. More recent research has further explored averaging operators on any binary operad  highlighting their connections to concepts such as bisuccessors  duplicators  and Rota-Baxter operators in 
\cite{AG, AO1, AO2,Basdouri4, Basdouri3,MK}.  
The paper is structured as follows: In Section 1  we recall the basic definitions and properties of Hom-dendriform and Hom-diassociative algebras. In Section 2  we introduce the notion of Hom-quadri-dendriform algebras and define relative averaging operators on Hom-dendriform algebras. We show that such an operator gives rise to a Hom-quadri-dendriform structure  and conversely  that any such structure can be embedded into a Hom-dendriform algebra equipped with a relative averaging operator. In Section 3  we define Hom-six-dendriform algebras as splittings of Hom-triassociative algebras. We also demonstrate that homomorphic relative averaging operators yield Hom-six-dendriform structures. Finally  we present a classification of low-dimensional Hom-quadri-dendriform algebras  illustrating structural properties and examples.\\
 Throughout this paper  $K$ denotes a field of characteristic zero. All the vector spaces
 and (multi)linear maps are taken over $K$.
\section{Hom-dendriform algebra and Hom-diassociative algebra}

In this section  we introduce the notions of Hom-dendriform algebras and Hom-diassociative algebras
 generalizing the classical dendriform and diassociative algebras to Hom-algebras setting.
 \subsection{Hom-dendriform algebras}\ 
Dendriform algebras were introduced by Loday in \cite{L}. Dendriform algebras are algebras with two operations  which dichotomize the notion of associative algebra. We
 generalize now this notion by twisting the identities by a linear map.
\begin{definition}\label{d1}
A Hom-dendriform algebra is a quadruple $(D,\  \prec,\  \succ,\  \alpha)$  where $D$ is a $\mathbb{K}$-vector space  the operations $\prec, \  \succ : \ D \times D \to D$ are bilinear maps and $\alpha : \ D \to D$ is a linear map (called the structure map)  such that for all $x , \ y,  z \in D$  the following identities hold:
\begin{align}
\alpha(x) \prec (y \prec z + y \succ z)& = (x \prec y) \prec \alpha(z) \label{eq1} \\
\alpha(x) \succ (y \prec z)& = (x \succ y) \prec \alpha(z) \label{eq2} \\
\alpha(x) \succ (y \succ z)& = (x \prec y + x \succ y) \succ \alpha(z)  \label{eq3}.
\end{align}
$(D,\  \prec,\  \succ,\  \alpha)$ is said to be a multiplicative Hom-algebra if $\forall x,\  y \in D$ we have $$\alpha(x\prec y)=\alpha(x)\prec\alpha(y)\quad \text{and}\quad\alpha(x\succ y)=\alpha(x)\succ\alpha(y).$$
\end{definition}
\begin{definition}\label{d2}
Given two Hom-dendriform algebras $(D,\  \prec,\  \succ,\  \alpha)$ \ and $(D',\  \prec',\  \succ',\  \alpha')$. A Hom-dendriform homomorphism is a linear map $T : \ D \to D'$ that satisfies the following compatibility conditions\\
$T(x \prec y) =T(x) \prec' T(y) ,\quad 
T(x \succ y) = T(x) \succ'T(y)  \quad \ and  \   T \circ \alpha = \alpha' \circ T$ 
for all $x ,\ y\in D$.\\
 In particular  if $T$ is bijective then $(D,\  \prec,\  \succ,\  \alpha)$ \ and \ $(D',\  \prec',\  \succ',\  \alpha')$ are said to be isomorphic.
\end{definition}
\begin{ex}
Let $(D,\  \prec,\  \succ,\  \alpha)$ be a Hom-dendriform algebra with a basis $\left\{e_1 ,\ e_2 ,\ e_3\right\}$ whose non-zero product are given by
$$
D(\eta)= \left\{
    \begin{array}{ll}
      e_1\prec e_2=\eta e_3&e_2\succ  e_1=e_3\\
      e_2\prec e_2=-\frac{1}{2}e_3&e_2\succ  e_2=e_1+\frac{1}{4}e_3\\
			e_3\prec e_2=e_3&e_3\succ  e_2=\eta e_1+e_3 
    \end{array}
\right.,\quad
\alpha = 
\begin{pmatrix}
0 & 1 &0 \\
0 & 0 &0 \\
0 &0&b
\end{pmatrix},\quad  b,\ \eta \in\mathbb{R}.
$$
\end{ex}

\begin{definition}\label{d5}
Let $(D,\  \prec,\  \succ,\  \alpha)$ be a Hom-dendriform algebra. A representation of $ D$ is given
 by a vector space $M$ together with four bilinear maps
$\prec_l ,\ \succ_l :\ D \times M \to M  ,\quad
\prec_r ,\ \succ_r :\ M \times D \to M  $
    representing the left and right actions. These maps must satisfy the following nine compatibility conditions: $\forall \ x,\ y \in D   ,\ m \in M $ we have
\begin{align*}
&\textbf{(I) Left compatibility conditions} \\
(x \prec y) \prec_l \beta(m) &= \alpha(x) \prec_l (y \prec_l m + y \succ_l m), \\
(x \succ y) \prec_l \beta(m) &= \alpha(x) \succ_l (y \prec_l m), \\
\alpha(x) \succ_l (y \succ_l m) &= (x \prec y + x \succ y) \succ_l \beta(m). \\
&\textbf{(II) Right compatibility conditions} \\
\beta(m) \prec_r (x \prec y + x \succ y) &= (m \prec_r x) \prec_r \alpha(y), \\
\beta(m) \succ_r (x \prec y) &= (m \succ_r x) \prec_r \alpha(y), \\
(m \prec_r x + m \succ_r x) \succ_r \alpha(y) &= \beta(m) \succ_r (x \succ y).\\
&\textbf{(III) Mixed compatibility conditions} \\
(x \prec_l m) \prec_r \alpha(y) &= \alpha(x) \prec_l (m \prec_r y + m \succ_r y), \\
(x \succ_l m) \prec_r \alpha(y) &= \alpha(x) \succ_l (m \prec_r y), \\
(x \prec_l m + x \succ_l m) \succ_r \alpha(y) &= \alpha(x) \succ_l (m \succ_r y).
\end{align*}
Any Hom-dendriform algebra $(D,\  \prec,\  \succ,\  \alpha)$ is a representation of itself with
$\prec=\prec_{l}=\prec_r \ and \  \succ=\succ_l=\succ_r.$
 This representation is known as the adjoint representation.
\end{definition}
\subsection{ Hom-diassociative algebras}
Diassociative algebras were introduced by Jean-Louis Loday \cite{L} as algebras equipped with two associative multiplications that satisfy specific compatibility conditions. We generalize now this notion by twisting the identities by a linear map.
 \begin{definition}\label{d6}
A Hom-diassociative algebras is a $4$-truple $(D,\  \dashv,\  \vdash,\  \alpha)$ consisting of a  linear space $D$  bilinear maps
 $\dashv ,\ \vdash:\ D\times D \longrightarrow D$ and  $\alpha :\ D\longrightarrow D$ satisfying  for all $x  ,\ y,\  z\in D$ 
the following conditions: 
\begin{eqnarray}
(x\dashv y)\dashv\alpha(z)&=&\alpha(x)\dashv(y\dashv z) \label{eq4}\\
(x\dashv y)\dashv\alpha(z)&=&\alpha(x)\dashv(y\vdash z) \label{eq5}\\
(x\vdash y)\dashv\alpha(z)&=&\alpha(x)\vdash(y\dashv z) \label{eq6}\\
(x\dashv y)\vdash\alpha(z)&=&\alpha(x)\vdash(y\vdash z) \label{eq7}\\
(x\vdash y)\vdash\alpha(z)&=&\alpha(x)\vdash(y\vdash z).\label{eq8}
\end{eqnarray}
\end{definition}
\begin{definition}\label{d3}
A Hom-associative algebra is a triple $(D ,\  \mu,\   \alpha)$  where
$D$ is a $\mathbb{K}$-vector space  \\$\mu :\ D \times D \to D ,\  \mu(x,\  y) = x \cdot y$ is a bilinear map   and $\alpha : \ D \to D $ is a linear map  such that the following Hom-associativity identity holds for all $x,\ y,\  z \in D$ we have
$\alpha(x) \cdot (y \cdot z) = (x \cdot y) \cdot \alpha(z)$. \\$(D ,\  \mu,\   \alpha)$ is said to be a multiplicative Hom-algebra if $\forall x ,\ y \in D$ we have $\alpha(x\cdot y)=\alpha(x)\cdot\alpha(y).$\\
Hom-diassociative algebras arise naturally from averaging operators on Hom-associative algebras:\\
 Let $(A ,\ \mu ,\ \alpha)$ be an Hom-associative algebra and $H$ be an averaging operator of it.\\ In other words 
 $H :\ A\to A$ is a linear map satisfying
 $\mu(Ha ,\ Hb) = H\mu(a ,\ Hb) = H\mu(Ha,\  b)  ,\ \forall\ a  ,\b \in A.$\\
We generalize to Hom-algebras setting. the result given by Aguiar in \cite{AG} for diassociative algebra. \\If  \( H \) is an averaging operator on \( (A ,\ \mu ,\ \alpha) \)  then the two operations \( \dashv \) and \( \vdash \) defined on \( A \)  $$a \dashv b =\mu(a,\  Hb)  \quad a \vdash b = \mu(Ha,\  b) $$
define a Hom-diassociative algebra structure on \( A \).
\end{definition}
\begin{definition}\label{d7}
Let $(D,\  \dashv ,\ \vdash,\  \alpha)$ be a Hom-diassociative algebra. A linear operator $R :\ D \to D $ is called a Rota-Baxter operator of weight zero if 
\begin{eqnarray}
 R \circ \alpha&=&\alpha \circ R\\
 R(x)\dashv R(y)&=&R(R(x) \dashv y + x \dashv R(y))\\
R(x)\vdash R(y) &=&R(R(x) \vdash y + x \vdash R(y))
\end{eqnarray}
  for all $x,\ y \in D$.
\end{definition}
\begin{proposition}
Let $(D  ,\ \dashv,\   \vdash ,\  \alpha)$ be a Hom-diassociative algebra and $R :\ D\longrightarrow D$ be a Rota-Baxter operator of weight $0$ on D i.e R is linear and commutes with both $\alpha$
and
\begin{eqnarray*}
 R(x)\dashv R(y)&=&R(R(x) \dashv y + x \dashv R(y))\\
R(x)\vdash R(y) &=&R(R(x) \vdash y + x \vdash R(y)).
\end{eqnarray*} 
Then  $(D  ,\ \unlhd,\  \unrhd,\  \alpha)$ is also Hom-diassociative algebra with:\\$x\unlhd y=R(x) \dashv y + x \dashv R(y)$ and $x\unrhd y =R(x) \vdash y + x \vdash R(y)  ,\  \forall x,\  y\in D.$ \\ Moreover  $R$ is a Rota-Baxter operator on $(D ,\ \unlhd,\  \unrhd,\  \alpha).$ 
\end{proposition}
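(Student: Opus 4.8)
The plan is to verify, in turn, the five Hom-diassociative identities \eqref{eq4}--\eqref{eq8} for the pair $(\unlhd,\unrhd)$ and then to check that $R$ remains a Rota-Baxter operator. The whole computation rests on two auxiliary identities that follow immediately from the Rota-Baxter relations: since $R(x)\dashv R(y)=R(R(x)\dashv y+x\dashv R(y))$ and the argument on the right is exactly $x\unlhd y$, we obtain
$$R(x\unlhd y)=R(x)\dashv R(y),\qquad R(x\unrhd y)=R(x)\vdash R(y),$$
the second following in the same way from the Rota-Baxter relation for $\vdash$. These let us collapse every occurrence of $R$ applied to a new product back into the original operations, which is what makes the verification mechanical.

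First I would treat the identity $(x\unlhd y)\unlhd\alpha(z)=\alpha(x)\unlhd(y\unlhd z)$. Expanding the left side by the definition of $\unlhd$ gives $R(x\unlhd y)\dashv\alpha(z)+(x\unlhd y)\dashv R(\alpha(z))$; applying the auxiliary identity, the commutation $R\circ\alpha=\alpha\circ R$, and then expanding $x\unlhd y$ once more, this becomes the sum of three terms $(R(x)\dashv R(y))\dashv\alpha(z)+(R(x)\dashv y)\dashv\alpha(R(z))+(x\dashv R(y))\dashv\alpha(R(z))$. Expanding the right side in the same manner produces $\alpha(R(x))\dashv(R(y)\dashv z)+\alpha(R(x))\dashv(y\dashv R(z))+\alpha(x)\dashv(R(y)\dashv R(z))$. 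Each of the three left-hand terms is carried onto the matching right-hand term by a single application of \eqref{eq4}, with arguments $(R(x),R(y),z)$, then $(R(x),y,R(z))$, then $(x,R(y),R(z))$, so the two sides agree.

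The remaining four identities follow the identical template; the only change is which original axiom performs the term-by-term matching: the second identity reduces via \eqref{eq5}, the third via \eqref{eq6}, the fourth via \eqref{eq7}, and the fifth via \eqref{eq8}. In every case the three summands of the expanded left side map precisely onto the three summands of the expanded right side under one use of the relevant axiom, because $\unlhd$ and $\unrhd$ inherit the exact positional roles of $\dashv$ and $\vdash$. To see finally that $R$ is Rota-Baxter for $(\unlhd,\unrhd)$, I would apply the original relation $R(u)\dashv R(v)=R(R(u)\dashv v+u\dashv R(v))$ twice, once with $(u,v)=(R(x),y)$ and once with $(u,v)=(x,R(y))$, and add: the sum of the left-hand sides is $R(x)\unlhd R(y)$, while the sum of the right-hand sides is exactly $R(R(x)\unlhd y+x\unlhd R(y))$. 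The same argument with $\vdash$ gives the relation for $\unrhd$, and $R\circ\alpha=\alpha\circ R$ is unchanged.

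I expect the main obstacle to be purely organizational: keeping the bookkeeping straight so that each expanded summand is matched to the correct instance among \eqref{eq4}--\eqref{eq8}, and making sure the commutation $R\circ\alpha=\alpha\circ R$ is invoked wherever an argument $\alpha(z)$ meets the operator $R$. There is no genuine analytic difficulty: the Rota-Baxter relations are precisely what collapses $R$ of a new product, and the Hom-diassociativity of $(\dashv,\vdash)$ is precisely what closes each of the five identities.
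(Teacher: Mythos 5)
Your proof is correct and takes essentially the same route as the paper: expand both sides of each identity, collapse $R$ of the new products via the Rota--Baxter relations (your auxiliary identities $R(x\unlhd y)=R(x)\dashv R(y)$, $R(x\unrhd y)=R(x)\vdash R(y)$), commute $R$ past $\alpha$, and match the three resulting summands term-by-term with a single application of the corresponding original axiom applied to triples with $R$ inserted. If anything, your write-up is more complete than the paper's, which verifies only one of the five identities by this mechanism (declaring the rest similar) and never addresses the final claim that $R$ remains a Rota--Baxter operator for $(\unlhd,\unrhd)$, whereas you give the explicit doubling argument for it.
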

\begin{proof}
We prove axiom (\ref{eq6})  the other being proved in a similar way. Thus  for any $x,\  y  ,\ z\in D$
\begin{align*}
(x \unlhd y)\unlhd\alpha(z)-\alpha(x)\unlhd(y \unrhd z)&=R(x\dashv R(y)+ R(x)\dashv y)\dashv\alpha(z)+R(R(x)\dashv y + x\dashv R(y)\dashv \alpha(z)\\
&-\alpha(x)\dashv R(R(y)\vdash z+y\vdash R(z))+R(\alpha(x))\dashv(R(y)\vdash z +y\vdash R(z))\\
&=(x \dashv R(y))\dashv R(\alpha (z))+(R(x)\dashv y)\dashv R(\alpha (z))+(R(x)\dashv R(y))\dashv\alpha(z)\\
&-\alpha(x)\dashv(R(y)\vdash R(z))-R(\alpha (x))\dashv(y\vdash R(z))-R(\alpha (x))\dashv(R(y)\vdash z)\\
&=0
\end{align*}
The left hand side vanishes by axiom (\ref{eq6}). The ends the proof.
\end{proof}
\begin{ex}
Let $(D ,\ \dashv,\  \vdash,\  \alpha)$ be a Hom-diassociative algebra with a basis $\left\{e_1 ,\ e_2 ,\ e_3\right\}$ whose non-zero product are given by 
$$
D= \left\{
    \begin{array}{ll}
      e_1\dashv e_2=-2e_3&e_1\vdash e_1=\frac{1}{4}e_3\\
      e_2\dashv e_1=e_3&e_1\vdash e_2=e_3\\
			e_2\dashv e_2=e_3&e_2\vdash e_2=e_3 
    \end{array}
\right.\quad
\alpha = 
\begin{pmatrix}
a & 1 &0 \\
0 & a &0 \\
0 &0&1 
\end{pmatrix} ,\ a\in\mathbb{R}.
$$
We have the operator Rota-Baxter:
$$
\begin{pmatrix}
0 &0 &0 \\
0 & 0 &0 \\
0 & r_{32}& r_{33}
\end{pmatrix} \quad
\begin{pmatrix}
0 &0 &0 \\
0 & 0 &0 \\
0 & r_{32}&0
\end{pmatrix} \quad
\begin{pmatrix}
2r_{33} &0 &0 \\
0 & 2r_{33} &0 \\
0 & r_{32}& r_{33}
\end{pmatrix}  ,\ r_{ij}\in\mathbb{R}.
$$
\end{ex}
\section{Hom-quadri-dendriform algebra}
In this section  we introduce the notions of Hom-quadri-dendriform algebras generalizing the classical quadri-dendriform algebras to Hom-algebras setting. Next  we introduce relative averaging
 operators on Hom-dendriform algebras. We show that a relative averaging operator induces a
 Hom-quadri-dendriform algebra. Finally  we show that any Hom-quadri-dendriform algebra can be
 embedded into an averaging Hom-dendriform algebra.
\begin{definition}\label{d8} 
A Hom-quadri-dendriform algebra is a tuple 
$(D,\  \prec_\vdash,\  \prec_\dashv,\  \succ_\vdash,\  \succ_\dashv,\  \alpha) $  where $D$ is a vector space  the operations $\prec_\vdash  ,\ \prec_\dashv ,\  \succ_\vdash,\   \ \succ_\dashv: \ D \times D \to D$ are bilinear  and $\alpha: \ D \to D$ is a linear map  
such that for all $x,\   y  ,\ z \in D$  the following identities hold
\begin{align}
(x \prec_\vdash y) \prec_\vdash \alpha(z) &= (x \prec_\dashv y) \prec_\vdash \alpha(z) = \alpha(x) \prec_\vdash (y \prec_\vdash z + y \succ_\vdash z) \label{Hq1} \\
(x \succ_\vdash y) \prec_\vdash \alpha(z) &= (x \succ_\dashv y) \prec_\vdash \alpha(z) = \alpha(x) \succ_\vdash (y \prec_\vdash z)\label{Hq2} \\
\alpha(x) \succ_\vdash (y \succ_\vdash z)&=(x \prec_\vdash y + x \succ_\vdash y)\succ_\vdash \alpha(z)=(x \prec_\dashv y + x \succ_\dashv y) \succ_\vdash \alpha(z) \label{Hq3} \\
\alpha(x) \succ_\vdash (y \succ_\vdash z)&=(x \prec_\dashv y + x \succ_\vdash y) \succ_\vdash \alpha(z)=(x \prec_\vdash y + x \succ_\dashv y) \succ_\vdash \alpha(z) \label{Hq4} \\
(x \prec_\vdash y) \prec_\dashv \alpha(z)& = \alpha(x) \prec_\vdash (y \prec_\dashv z + y \succ_\dashv z) \label{Hq5} \\
(x \succ_\vdash y) \prec_\dashv \alpha(z)&= \alpha(x) \succ_\vdash (y \prec_\dashv z) \label{Hq6} \\
\alpha(x) \succ_\vdash (y \succ_\dashv z)& = (x \prec_\vdash y + x \succ_\vdash y) \succ_\dashv \alpha(z) \label{Hq7} \\
(x \prec_\dashv y) \prec_\dashv \alpha(z)&=\alpha(x) \prec_\dashv (y \prec_\vdash z + y \succ_\vdash z)=\alpha(x) \prec_\dashv (y \prec_\dashv z + y \succ_\dashv z)\label{Hq8} \\
(x \prec_\dashv y) \prec_\dashv \alpha(z)&=\alpha(x) \prec_\dashv (y \prec_\vdash z + y \succ_\dashv z)=\alpha(x) \prec_\dashv (y \prec_\dashv z + y \succ_\vdash z) \label{Hq9} \\
(x \succ_\dashv y) \prec_\dashv \alpha(z) &= \alpha(x) \succ_\dashv (y \prec_\vdash z) = \alpha(x) \succ_\dashv (y \prec_\dashv z)\label{Hq10} \\
\alpha(x)\succ_\dashv (y \succ_\vdash z) &= \alpha(x) \succ_\dashv (y \succ_\dashv z) = (x \prec_\dashv y + x \succ_\dashv y) \succ_\dashv \alpha(z)  \label{Hq11}\\
\end{align}
$(D,\  \prec_\vdash,\  \prec_\dashv,\  \succ_\vdash,\  \succ_\dashv,\  \alpha) $ is said to be a multiplicative Hom-algebra if $\forall x,\  y,\ \in D $ we have 
\begin{align*}
    \alpha(x \prec_\vdash y) &= \alpha(x) \prec_\vdash' \alpha(y)  ,\quad
\alpha(x \prec_\dashv y) = \alpha(x) \prec_\dashv' \alpha(y) ,\\
\alpha(x \succ_\vdash y) &= \alpha(x) \succ_\vdash' \alpha(y)  ,\quad 
\alpha(x \succ_\dashv y) = \alpha(x) \succ_\dashv' \alpha(y).
\end{align*}
\end{definition}
\begin{definition}
Given two Hom-quadri-dendriform algebras$(D,\  \prec_\vdash,\  \prec_\dashv,\  \succ_\vdash,\  \succ_\dashv,\  \alpha) $and $(D',\  \prec_\vdash',\  \prec_\dashv',\  \succ_\vdash',\  \succ_\dashv',\  \alpha') $. A Hom-quadri-dendriform homomorphism is a linear map $T : \ D \to D'$ that satisfies the following compatibility conditions
\begin{align*}
T(x \prec_\vdash y) &=T(x) \prec_\vdash' T(y) ,\quad T(x \prec_\dashv y) = T(x) \prec_\dashv'T(y) ,\\
T(x \succ_\vdash y) &=T(x) \succ_\vdash' T(y) ,\quad T(x \succ_\dashv y) =T(x) \succ_\dashv' T(y),\\ 
T \circ \alpha&=\alpha' \circ T.
\end{align*}
for \ all\ $x ,\  y\in D$. In particular  if $T$ is bijective  then \\ $(D,\  \prec_\vdash,\  \prec_\dashv,\  \succ_\vdash,\  \succ_\dashv,\  \alpha) $ and $(D',\  \prec_\vdash',\  \prec_\dashv',\  \succ_\vdash',\  \succ_\dashv',\  \alpha') $are said to be isomorphic.
\end{definition}
\begin{proposition}\label{qpro} 
Let $(D, \prec_\vdash,\prec_\dashv,\succ_\vdash,\succ_\dashv,\alpha) $ be an a Hom-quadri-dendriform algebra. Then $(D,\vdash,\dashv,\alpha)$ is a Hom-diassociative algebra  where
\begin{align*}
x \vdash y &= x \prec_\vdash y + x \succ_\vdash y  \\
x \dashv y &= x \prec_\dashv y + x \succ_\dashv y  
\end{align*}
for all $x ,\ y \in D$.
\end{proposition}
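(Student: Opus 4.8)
The plan is to verify, one by one, the five defining identities (\ref{eq4})--(\ref{eq8}) of a Hom-diassociative algebra for the operations $\vdash$ and $\dashv$. The only ingredients needed are the bilinearity of the four quadri-dendriform operations together with the identities (\ref{Hq1})--(\ref{Hq11}); in particular no multiplicativity hypothesis on $\alpha$ enters, since Definition \ref{d6} imposes none and, in every axiom, $\alpha$ is only ever applied to a single element. First I would fix $x,y,z\in D$ and substitute $x\vdash y=x\prec_\vdash y+x\succ_\vdash y$ and $x\dashv y=x\prec_\dashv y+x\succ_\dashv y$ into both sides of the axiom under consideration. Since each diassociative product of three elements is a doubly nested binary operation and each operation splits into two pieces, expanding bilinearly turns each side of each axiom into a sum of exactly four quadri-dendriform terms.

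The heart of the argument is then a bookkeeping match: the family (\ref{Hq1})--(\ref{Hq11}) is designed precisely so that, for each axiom, the four left-hand terms can be regrouped into the four right-hand terms, each axiom consuming three of the quadri-dendriform identities. For (\ref{eq8}), for instance, the left side expands as $(x\prec_\vdash y)\prec_\vdash\alpha(z)+(x\succ_\vdash y)\prec_\vdash\alpha(z)+(x\prec_\vdash y)\succ_\vdash\alpha(z)+(x\succ_\vdash y)\succ_\vdash\alpha(z)$; rewriting the first term by (\ref{Hq1}), the second by (\ref{Hq2}), and the fused pair $(x\prec_\vdash y+x\succ_\vdash y)\succ_\vdash\alpha(z)$ by (\ref{Hq3}) produces exactly the four-term expansion of $\alpha(x)\vdash(y\vdash z)$. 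The remaining cases follow the same scheme: (\ref{eq7}) uses the $\prec_\dashv$/$\succ_\dashv$ branches of (\ref{Hq1})--(\ref{Hq3}); (\ref{eq6}) uses (\ref{Hq5}), (\ref{Hq6}), (\ref{Hq7}); and (\ref{eq4}), (\ref{eq5}) both use (\ref{Hq8}), (\ref{Hq10}), (\ref{Hq11}), read along the $\dashv$- and $\vdash$-branches of those multi-equalities respectively.

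The step demanding the most care, rather than any real difficulty, is the treatment of the two $\succ$-type summands: the relevant quadri-dendriform relations always present them in the fused form $(x\prec_? y+x\succ_? y)\succ_?\alpha(z)$, so one must combine $(x\prec_? y)\succ_?\alpha(z)$ and $(x\succ_? y)\succ_?\alpha(z)$ before applying the identity rather than matching them individually. A second point of attention is choosing the correct branch of each multi-equality identity (the variant ending in a $\vdash$-decomposition versus a $\dashv$-decomposition); this is exactly what allows the single list (\ref{Hq1})--(\ref{Hq11}) to cover the distinct axioms (\ref{eq4}) and (\ref{eq5}) sharing a common left side, and likewise (\ref{eq7}) and (\ref{eq8}) sharing a common right side. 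Once this correspondence is tabulated, each of the five verifications collapses to a routine four-term identification, completing the proof.
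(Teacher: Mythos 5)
Your proposal is correct and takes essentially the same approach as the paper's proof: bilinear expansion of each of the five Hom-diassociative axioms followed by term-by-term application of the quadri-dendriform identities, with exactly the same assignment of identities to axioms ((\ref{Hq1})--(\ref{Hq3}) for (\ref{eq7}) and (\ref{eq8}), (\ref{Hq5})--(\ref{Hq7}) for (\ref{eq6}), and the $\dashv$- and $\vdash$-branches of (\ref{Hq8}), (\ref{Hq10}), (\ref{Hq11}) for (\ref{eq4}) and (\ref{eq5})), including the need to fuse the two $\succ$-type summands before applying the relevant identity. No gaps.
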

\begin{proof}
$\forall x,\  y,\  z \in D $ we have 
\begin{align*}
(x \dashv y) \dashv \alpha(z) 
&= (x \prec_\dashv y + x \succ_\dashv y) \succ_\dashv \alpha(z) + (x \prec_\dashv y + x \succ_\dashv y) \prec_\dashv \alpha(z) \\
&= (x \prec_\dashv y + x \succ_\dashv y) \succ_\dashv \alpha(z) + (x \prec_\dashv y) \prec_\dashv \alpha(z) + (x \succ_\dashv y) \prec_\dashv \alpha(z) \\
&= (x \prec_\dashv y + x \succ_\dashv y) \succ_\dashv \alpha(z) + \alpha(x) \prec_\dashv (y \prec_\dashv z + y \succ_\dashv z) + (x \succ_\dashv y) \prec_\dashv \alpha(z) \\
&= \alpha(x) \succ_\dashv (y \prec_\dashv z) + \alpha(x) \succ_\dashv (y \succ_\dashv z) + \alpha(x) \prec_\dashv (y \prec_\dashv z + y \succ_\dashv z) \\
&= \alpha(x) \dashv (y \dashv z), \\
\\
(x \dashv y) \dashv \alpha(z) 
&= (x \prec_\dashv y + x \succ_\dashv y) \succ_\dashv \alpha(z) + (x \prec_\dashv y) \prec_\dashv \alpha(z) \\
&= (x \prec_\dashv y + x \succ_\dashv y) \succ_\dashv \alpha(z) + \alpha(x) \prec_\dashv (y \prec_\vdash z + y \succ_\vdash z) + (x \succ_\dashv y) \prec_\dashv \alpha(z) \\
&= \alpha(x) \prec_\dashv (y \prec_\vdash z + y \succ_\vdash z) + \alpha(x) \succ_\dashv (y \prec_\vdash z) + \alpha(x) \succ_\dashv (y \succ_\vdash z) \\
&= \alpha(x) \dashv (y \vdash z), \\
\\
(x \vdash y) \dashv \alpha(z) 
&= (x \prec_\vdash y + x \succ_\vdash y) \prec_\dashv \alpha(z) + (x \prec_\vdash y + x \succ_\vdash y) \succ_\dashv \alpha(z) \\
&= \alpha(x) \succ_\vdash (y \succ_\dashv z) + (x \prec_\vdash y) \prec_\dashv \alpha(z) + (x \succ_\vdash y) \prec_\dashv \alpha(z) \\
&= \alpha(x) \succ_\vdash (y \succ_\dashv z) + \alpha(x) \succ_\vdash (y \prec_\dashv z) + \alpha(x) \prec_\vdash (y \succ_\dashv z + y \prec_\dashv z) \\
&= \alpha(x) \vdash (y \dashv z), \\
\\
(x \dashv y) \vdash \alpha(z) 
&= (x \prec_\dashv y + x \succ_\dashv y) \prec_\vdash \alpha(z) + (x \prec_\dashv y + x \succ_\dashv y) \succ_\vdash \alpha(z) \\
&= (x \prec_\dashv y + x \succ_\dashv y) \prec_\vdash \alpha(z) + \alpha(x) \succ_\vdash (y \succ_\vdash z) \\
&= \alpha(x) \succ_\vdash (y \succ_\vdash z) + \alpha(x) \prec_\vdash (y \prec_\vdash z + y \succ_\vdash z) + \alpha(x) \succ_\vdash (y \succ_\vdash z) \\
&= \alpha(x) \vdash (y \vdash z), \\
\\
(x \vdash y) \vdash \alpha(z) 
&= (x \prec_\vdash y + x \succ_\vdash y) \prec_\vdash \alpha(z) + (x \prec_\vdash y + x \succ_\vdash y) \succ_\vdash \alpha(z) \\
&= (x \prec_\vdash y) \prec_\vdash \alpha(z) + (x \succ_\vdash y) \prec_\vdash \alpha(z) + \alpha(x) \succ_\vdash (y \succ_\vdash z) \\
&= \alpha(x) \prec_\vdash (y \prec_\vdash z + y \succ_\vdash z) + \alpha(x) \succ_\vdash (y \prec_\vdash z) + \alpha(x) \succ_\vdash (y \succ_\vdash z) \\
&= \alpha(x) \vdash (y \vdash z).
\end{align*}

The end of the proof.
\end{proof}
\begin{definition}\label{d9}
Let $(D,\  \prec_\vdash,\  \prec_\dashv,\  \succ_\vdash,\  \succ_\dashv ,\ \alpha)$ be a Hom-quadri-dendriform algebra and $D_0$ subset of $D$\\
We say that $D_0$ is a Hom-quadri-dendriform subalgebra of $D$ if $D_0$ is  stable under $\alpha, \  \ \prec_\vdash,\  \ \prec_\dashv,\   \succ_\vdash$ and $\succ_\dashv$, i.e. $\forall\ x,\  y  ,\ \ z \ \in \ D_0 ,\ \alpha(x) \in D_o,\  x\prec_\vdash y\in D_o , \ x\prec_\dashv y\in D_o  ,\ x\succ_\vdash y\in D_o$ and $x\succ_\dashv y \in D_0.$
\end{definition}
\begin{ex}\label{exemple3.5}
  Let $(D,\  \prec_\vdash,\  \prec_\dashv,\  \succ_\vdash,\  \succ_\dashv,\  \alpha) $  be a Hom-quadri-dendriform algebra. The vector subspace $I_D$ spanned by
  \[
\left\{
x \prec_\dashv y - x \prec_\vdash y ,\quad 
x \succ_\dashv y - x \succ_\vdash y 
\;\middle|\; x,\ y \in D
\right\}
\]
 is an ideal of $D$. It is clear that $D$ is a Hom-dendriform algebra if and only if  $I_D=\left\{0\right\}$. Therefore  the quotient algebra $(D / I_D ,\  \prec ,\ \succ ,\ \overline{\alpha})$ is a Hom-dendriform algebra where\\ 
 $\overline{\alpha}(\overline{x})=\overline{\alpha(x)} ,\quad
\overline{x} \prec \overline{y} = \overline{x \prec_{\vdash} y} = \overline{x \prec_{\dashv} y}  ,\quad
\overline{x} \succ \overline{y} = \overline{x \succ_{\vdash} y}= \overline{x \succ_{\dashv} y}$,\  
 for all $x,\  y \in D$ \ where $\overline{x}$ denotes the class of an element $x \in D$.
\end{ex}
\begin{proposition}
Let $(A ,\ \prec_{\vdash_{A}},\  \prec_{\dashv_{A}} ,\ \succ_{\vdash_{A}} ,\ \succ_{\dashv_{A}},\  \alpha_{A})$ and $(B  ,\ \prec_{\vdash_{B}},\  \prec_{\dashv_{B}},\  \succ_{\vdash_{B}} ,\ \succ_{\dashv_{B}} ,\ \alpha_{B})$ be two Hom-quadri-dendriform algebras. Then  there exists a Hom-quadri-dendriform algebra structure on $A\oplus B$ with the bilinear maps:
$\lhd , \  \unlhd,\  \rhd,\  \unrhd : \ (A\oplus B)^{\otimes2}\longrightarrow A\oplus B$ given by  
\begin{align*}
 (a_1+b_1)\prec_\vdash(a_2+b_2):&=a_1\prec_{\vdash_{A}}a_2+b_2\prec_{\vdash_{B}}b_2 \\
  (a_1+b_1)\prec_\dashv(a_2+b_2):&=a_1\prec_{\dashv_{A}}a_2+b_2\prec_{\dashv_{B}}b_2 \\
	(a_1+b_1)\succ_\vdash(a_2+b_2):&=a_1\succ_{\vdash_{A}}a_2+b_2\succ_{\vdash_{B}}b_2 \\
  (a_1+b_1)\succ_\dashv(a_2+b_2):&=a_1\succ_{\dashv_{A}}a_2+b_2\succ_{\dashv_{B}}b_2 
\end{align*}
and the linear map
 $\alpha=\alpha_A+\alpha_B :\ A\oplus B\longrightarrow A\oplus B$ given by \\
$$(\alpha_A+\alpha_B)(a+b):=\alpha_A(a)+\alpha_B(b)   ,\ \forall(a,\ b)\in A\times B.$$

Moreover  if $\xi : \ A\longrightarrow B$ is a linear map. Then  
\begin{align*}
 \xi  : \ (A ,\ \prec_{\vdash_{A}},\  \prec_{\dashv_{A}},\  \succ_{\vdash_{A}},\  \succ_{\dashv_{A}} ,\ \alpha_{A})\longrightarrow(B  ,\ \prec_{\vdash_{B}} ,\ \prec_{\dashv_{B}},\  \succ_{\vdash_{B}},\  \succ_{\dashv_{B}} ,\ \alpha_{B})
\end{align*}
is a morphism if and only if its graph $\Gamma_\xi=\left\{(x  ,\ \Gamma(x) ,\ x\in A\right\}$ is a Hom-quadri-dendriform algebra of 
$(A\oplus B,\  \prec_\vdash ,\  \prec_\dashv  ,\ \succ_\vdash,  \ \succ_\dashv ,\ \alpha).$
\end{proposition}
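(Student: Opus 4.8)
The plan is to treat the two assertions separately. For the first, I would verify that the eleven axioms \eqref{Hq1}--\eqref{Hq11} hold for the product structure on $A\oplus B$. The decisive observation is that the four operations defined on $A\oplus B$ are \emph{block-diagonal}: a product such as $(a_1+b_1)\prec_\vdash(a_2+b_2)=a_1\prec_{\vdash_{A}}a_2+b_1\prec_{\vdash_{B}}b_2$ carries no cross terms between $A$ and $B$, and likewise $\alpha=\alpha_A+\alpha_B$ preserves the decomposition $A\oplus B$. Consequently, when one expands any of the axioms on a general triple $(a_1+b_1,\,a_2+b_2,\,a_3+b_3)$, every monomial produced lands entirely in $A$ or entirely in $B$, so each identity splits as the sum of the corresponding identity evaluated in $A$ and the one evaluated in $B$. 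Since $(A,\prec_{\vdash_{A}},\prec_{\dashv_{A}},\succ_{\vdash_{A}},\succ_{\dashv_{A}},\alpha_A)$ and $(B,\prec_{\vdash_{B}},\prec_{\dashv_{B}},\succ_{\vdash_{B}},\succ_{\dashv_{B}},\alpha_B)$ are Hom-quadri-dendriform by hypothesis, each piece vanishes and the axiom holds on $A\oplus B$. I would write out one representative case, say \eqref{Hq1}, in full and remark that the remaining ten are structurally identical; the multiplicativity of $\alpha$, when required, follows by the same coordinatewise splitting.

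For the second assertion I would use the elementary description of membership in the graph: an element $(x,y)\in A\oplus B$ lies in $\Gamma_\xi=\{(x,\xi(x)):x\in A\}$ if and only if $y=\xi(x)$. $(\Rightarrow)$ Assume $\xi$ is a Hom-quadri-dendriform homomorphism. For two generators $(x,\xi(x))$ and $(y,\xi(y))$ of $\Gamma_\xi$, each product is computed componentwise, e.g.
$$(x,\xi(x))\prec_\vdash(y,\xi(y))=\bigl(x\prec_{\vdash_{A}}y,\ \xi(x)\prec_{\vdash_{B}}\xi(y)\bigr).$$
The homomorphism identity $\xi(x)\prec_{\vdash_{B}}\xi(y)=\xi(x\prec_{\vdash_{A}}y)$ says precisely that the second coordinate is $\xi$ of the first, so this product lies in $\Gamma_\xi$; the same holds for $\prec_\dashv,\succ_\vdash,\succ_\dashv$, while $\xi\circ\alpha_A=\alpha_B\circ\xi$ gives $\alpha(x,\xi(x))=(\alpha_A(x),\xi(\alpha_A(x)))\in\Gamma_\xi$. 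Hence $\Gamma_\xi$ is stable under $\alpha$ and the four operations, i.e.\ a Hom-quadri-dendriform subalgebra in the sense of Definition~\ref{d9}.

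$(\Leftarrow)$ Conversely, if $\Gamma_\xi$ is a subalgebra, then for all $x,y\in A$ the product $(x,\xi(x))\prec_\vdash(y,\xi(y))$ must lie in $\Gamma_\xi$. Its first coordinate is $x\prec_{\vdash_{A}}y$, so membership forces its second coordinate to equal $\xi(x\prec_{\vdash_{A}}y)$; comparing with the computed second coordinate $\xi(x)\prec_{\vdash_{B}}\xi(y)$ yields the homomorphism identity for $\prec_\vdash$. Repeating this for $\prec_\dashv,\succ_\vdash,\succ_\dashv$ and for $\alpha$-stability recovers every condition in the definition of a Hom-quadri-dendriform homomorphism, completing the equivalence.

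I expect no serious obstacle: both parts reduce to the single fact that the structure on $A\oplus B$ is diagonal, so all the axioms and all the morphism conditions decouple along the two coordinates. The only point demanding a little care is the ``only if'' direction of the graph statement, where one reads off the homomorphism identities by projecting the subalgebra condition onto the second coordinate and using that the first coordinate already determines the unique preimage under $\xi$; the bookkeeping must be carried out cleanly and simultaneously for all four operations.
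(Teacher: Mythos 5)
Your proposal is correct and follows essentially the same route as the paper: the first assertion by a coordinatewise (block-diagonal) verification of the axioms, which the paper simply omits as ``a direct computation,'' and the graph characterization by computing the four products and $\alpha_A\oplus\alpha_B$ on pairs $(x,\xi(x))$ and reading off membership in $\Gamma_\xi$. If anything, your converse direction is written more carefully than the paper's, since you make explicit that membership in the graph forces the second coordinate to equal $\xi$ applied to the first, which is exactly where the homomorphism identities come from.
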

\begin{proof}
The proof of the first part comes from a direct computation  so we omit it. Now  let us suppose that 
$\xi :\  (A , \ \prec_{\vdash_{A}} ,\ \prec_{\dashv_{A}},\   \succ_{\vdash_{A}} ,\ \succ_{\dashv_{A}},\  \alpha_{A})\longrightarrow(B ,\ \prec_{\vdash_{B}} ,\ \prec_{\dashv_{B}} ,\ \succ_{\vdash_{B}},\  \succ_{\dashv_{B}} ,\ \alpha_{B})$ is a morphism of Hom-quadri-dendriform algebras. Then 
\begin{align*}
 (x+\xi(x))\prec_\vdash(y+\xi(y)):&=(x\prec_{\vdash_{A}}y+\xi(x)\prec_{\vdash_{B}}\xi(y))=x\prec_{\vdash_{A}} y+\xi(x\prec_{\vdash_{A}} y) \\
 (x+\xi(x))\prec_\dashv(y+\xi(y)):&=(x\prec_{\dashv_{A}}y+\xi(x)\prec_{\dashv_{B}}\xi(y))=x\prec_{\dashv_{A}} y+\xi(x\prec_{\dashv_{A}} y) \\
 (x+\xi(x))\succ_\vdash(y+\xi(y)):&=(x\succ_{\vdash_{A}}y+\xi(x)\succ_{\vdash_{B}}\xi(y))=x\succ_{\vdash_{A}} y+\xi(x\succ_{\vdash_{A}} y) \\
 (x+\xi(x))\succ_\dashv(y+\xi(y)):&=(x\succ_{\dashv_{A}}y+\xi(x)\succ_{\dashv_{B}}\xi(y))=x\succ_{\dashv_{A}} y+\xi(x\succ_{\dashv_{A}} y).
\end{align*}
Thus the graph $\Gamma_{\xi}$ is closed under the operations $\prec_\vdash,\  \prec_\dashv,\  \succ_\vdash$ and $\succ_\dashv.$\\
Furthermore since $\xi\circ\alpha_A=\alpha_B\circ\xi $ we have 
\begin{align*}
 (\alpha_A\oplus\alpha_B)(x ,\ \xi(x))=(\alpha_A(x),\ \alpha_B\circ\xi(x))=(\alpha_A(x),\
  \xi\circ\alpha_A(x)).
\end{align*}
Which implie that $\Gamma_\xi$ is closed $\alpha_A\oplus\alpha_B.$ Thus  $\Gamma_\xi$ is a Hom-subalagebra of $(A\oplus B ,\ \prec_\vdash ,\ \prec_\dashv,\  \succ_\vdash ,\ \succ_\dashv ,\ \alpha).$ 
Conversely  if the graph $\Gamma_\xi\subset A\oplus B$ is a Hom-subalgebra of $(A\oplus B ,\ \prec_\vdash ,\ \prec_\dashv,\  \succ_\vdash,\  \succ_\dashv,\  \alpha)$ then we have
\begin{align*}
 (x+\xi(x))\prec_\vdash(y+\xi(y)):&=(x\prec_{\vdash_{A}}y+\xi(x)\prec_{\vdash_{B}}\xi(y))\in\Gamma_\xi \\
 (x+\xi(x))\prec_\dashv(y+\xi(y)):&=(x\prec_{\dashv_{A}}y+\xi(x)\prec_{\dashv_{B}}\xi(y))\in\Gamma_\xi \\
 (x+\xi(x))\succ_\vdash(y+\xi(y)):&=(x\succ_{\vdash_{A}}y+\xi(x)\succ_{\vdash_{B}}\xi(y))\in\Gamma_\xi \\
 (x+\xi(x))\succ_\dashv(y+\xi(y)):&=(x\succ_{\dashv_{A}}y+\xi(x)\succ_{\dashv_{B}}\xi(y))\in\Gamma_\xi.
\end{align*}
Furthermore  $(\alpha_A\oplus\alpha_B)(\Gamma_\xi)\subset\Gamma_\xi $ implies $$ (\alpha_A\oplus\alpha_B)(x,\ \xi(x))=(\alpha_A(x) ,\ \alpha_B\circ\xi(x))\in\Gamma_\xi$$

which is equivalent to the condition $\alpha_B\circ\xi(x)=\xi\circ\alpha_A(x) $.i.e \ $\alpha_B\circ\xi=\xi\circ\alpha_A.$ Therefore  $\xi$ is a morphism of Hom-quadri-dendriform algebras.
\end{proof}

\begin{proposition}\label{rpro}
Let $(V ,\  \prec_l ,\  \succ_l,\   \prec_r ,\ \succ_r  ,\ \beta)$ be a representation of a Hom-dendriform algebra \\
$(D  ,\ \prec  ,\ \succ  ,\ \alpha)$. Then 
$(D \oplus V,\ \prec_\vdash,\  \prec_\dashv,\  \succ_\vdash,\  \succ_\dashv,\  \alpha \otimes \beta)$
is a Hom-quadri-dendriform algebra where 
\begin{align*}
(x ,\ u) \prec_\vdash (y ,\  v) &:= (x \prec y ,\ x \prec_l v)  \\
(x ,\ u) \prec_\dashv (y ,\  v) &:= (x \prec y ,\ u \prec_r y)  \\
(x ,\ u) \succ_\vdash (y ,\  v) &:= (x \succ y ,\ x \succ_l v)  \\
(x ,\ u) \succ_\dashv (y ,\  v) &:= (x \succ y ,\ u \succ_r y) .
\end{align*}
$\forall x,\ y \in D \ and\ u ,\ v \in V$. This Hom-quadri-dendriform algebra is called the hemi-semidirect product Hom-quadri-dendriform algebra.
\end{proposition}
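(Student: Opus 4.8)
The plan is to verify the eleven defining identities \eqref{Hq1}--\eqref{Hq11} by separating each of them into its $D$-component and its $V$-component. First I would record that the twisting map acts componentwise, $(\alpha\otimes\beta)(x,u)=(\alpha(x),\beta(u))$, and that the four products on $D\oplus V$ are triangular: in every case the first component is one of the two Hom-dendriform products on $D$, namely
\[
(x,u)\prec_\vdash(y,v)=(x\prec y,\ x\prec_l v),\qquad (x,u)\prec_\dashv(y,v)=(x\prec y,\ u\prec_r y),
\]
\[
(x,u)\succ_\vdash(y,v)=(x\succ y,\ x\succ_l v),\qquad (x,u)\succ_\dashv(y,v)=(x\succ y,\ u\succ_r y).
\]
Consequently each quadri-dendriform identity splits into two independent identities, one living in $D$ and one in $V$, which can be treated separately.

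For the $D$-component the key observation is that $\prec_\vdash$ and $\prec_\dashv$ both project to $\prec$, while $\succ_\vdash$ and $\succ_\dashv$ both project to $\succ$; moreover $x\prec_\vdash y+x\succ_\vdash y$ and $x\prec_\dashv y+x\succ_\dashv y$ both project to $x\prec y+x\succ y$. Hence the $D$-component of each of \eqref{Hq1}--\eqref{Hq11} collapses to one of the three Hom-dendriform axioms \eqref{eq1}, \eqref{eq2}, \eqref{eq3} (and, wherever the two sides of a chained equality carry the same first component, to a tautology). These hold since $(D,\prec,\succ,\alpha)$ is a Hom-dendriform algebra, so the $D$-part is automatic.

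The substance is in the $V$-component, which reproduces exactly the nine compatibility conditions of Definition \ref{d5}. For instance, isolating the $V$-components of the two sides of \eqref{Hq5} yields
\[
(x\prec_l v)\prec_r\alpha(z)\qquad\text{and}\qquad \alpha(x)\prec_l(v\prec_r z+v\succ_r z),
\]
whose equality is precisely the first Mixed compatibility condition. Running through the list one finds that \eqref{Hq1}, \eqref{Hq2}, \eqref{Hq3} (and the redundant \eqref{Hq4}) reproduce the three Left conditions, that \eqref{Hq5}, \eqref{Hq6}, \eqref{Hq7} reproduce the three Mixed conditions, and that \eqref{Hq8}, \eqref{Hq9}, \eqref{Hq10}, \eqref{Hq11} reproduce the three Right conditions, while every inner equality of a chain becomes a tautology once the appropriate component is isolated.

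The argument requires no clever idea; the only genuine work is bookkeeping. The point to watch is that the subscripts $\vdash$ and $\dashv$ govern which factor acts on $V$: a $\vdash$-operation applies a left action of the left factor, whereas a $\dashv$-operation applies a right action by the right factor. Keeping track of which variable acts from which side, and hence whether a given $V$-component matches a Left, a Right, or a Mixed condition, is the one place where care is needed. Once all nineteen scalar identities (counting the chained equalities) have been matched to their counterparts in Definition \ref{d5}, the verification is complete and $(D\oplus V,\prec_\vdash,\prec_\dashv,\succ_\vdash,\succ_\dashv,\alpha\otimes\beta)$ is a Hom-quadri-dendriform algebra.
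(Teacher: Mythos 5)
Your proposal is correct and takes essentially the same route as the paper: the paper likewise verifies the axioms componentwise on pairs, using the Hom-dendriform identities \eqref{eq1}--\eqref{eq3} of $D$ for the first component and the nine compatibility conditions of Definition \ref{d5} for the $V$-component (it writes out the computations for \eqref{Hq1}--\eqref{Hq3} and \eqref{Hq5} with $\ast\in\{\vdash,\dashv\}$ and declares the remaining identities analogous). Your observation that the chained equalities in \eqref{Hq1}--\eqref{Hq4} and \eqref{Hq8}--\eqref{Hq11} are tautological because each operation depends only on the relevant first component, and your explicit matching of the eleven identities to the Left, Mixed, and Right conditions, is exactly the bookkeeping the paper leaves implicit.
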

\begin{proof}
For any $(x,\  u),\  (y,\  v)   \ (z ,\ w) \in D \oplus V$ and $\ast \in \{ \vdash ,\ 
 \dashv\} $  we have
\begin{align*}
((x,\ u)\prec_\ast(y,\ v)) \prec_\vdash (\alpha(z), \beta(w))
&= ((x \prec y) \prec \alpha(z),\ (x \prec y) \prec_\ell \beta(w)) \\
&= (\alpha(x) \prec (y \prec z + y \succ z),\ \alpha(x) \prec_\ell (v \prec_\ell w + v \succ_\ell w)) \\
&= (\alpha(x),\ \beta(u)) \prec_{\vdash} \big((y,\ v) \prec_{\vdash} (z,\ w) + (y,\ v) \succ_{\vdash} (z,\ w)), \\
\\
((x,\ u) \succ_{\ast} (y,\ v)) \prec_{\vdash} (\alpha(z), \beta(w))
&= ((x \succ y) \prec \alpha(z),\ (x \succ y) \prec_\ell \beta(w)) \\
&= (\alpha(x) \succ (y \prec z),\ \alpha(x) \succ_\ell (y \prec_\ell w)) \\
&= (\alpha(x),\ \beta(u)) \succ_{\vdash} ((y,\ v) \prec_{\vdash} (z,\ w)), \\
\\
(\alpha(x),\ \beta(u)) \succ_{\vdash} ((y,\ v) \succ_{\vdash} (z,\ w))
&= (\alpha(x) \succ (y \succ z),\ \alpha(x) \succ_\ell (y \succ_\ell w)) \\
&= ((x \prec y + x \succ y) \succ \alpha(z),\ (x \prec y + x \succ y) \succ_\ell \beta(w)) \\
&= ((x,\ u) \prec_{\ast} (y,\ v) + (x,\ u) \succ_{\ast} (y,\ v)) \succ_{\vdash} (\alpha(z), \beta(w)), \\
\\
((x,\ u)\prec_{\vdash}(y,\ v))\prec_{\dashv}(\alpha(z), \beta(w))
&= ((x \prec y) \prec \alpha(z),\ (x \prec_\ell v) \prec_r \alpha(z)) \\
&= (\alpha(x) \prec (y \prec z + y \succ z),\ \alpha(x) \prec_\ell (v \prec_r z + v \succ_r z)) \\
&= (\alpha(x), \beta(u)) \prec_{\vdash} \left((y,\ v) \prec_{\dashv}(z,\ w) + (y,\ v) \succ_{\dashv}(z,\ w)\right).
\end{align*}
Thus the Eq (\ref{Hq5}) hold.
 The same for Eqs (\ref{Hq1}) à (\ref{Hq11}) . This completes the proof.
\end{proof}
\begin{definition}\label{d10}
Let $(D ,\ \prec,\  \succ,\  \alpha)$ be a Hom-dendriform algebra and $(V,\   ,\ \prec_{\ell} ,\ \succ_{\ell} ,\  \prec_{r} ,\ \succ_{r}  ,\ \beta)$ be a representation of it. A relative averaging operator on $D$ with respect to the representation 
$(V,\   \prec_{\ell} ,\ \succ_{\ell} ,\ \prec_{r} ,\ \succ_{r},\ \beta)$ is a linear map $T :\ V \to D$ that satisfies
\begin{align*}
Tu \prec Tv& = T(Tu \prec_{\ell} v) = T(u \prec_{r} Tv), \\
Tu \succ Tv &= T(Tu \succ_{\ell} v) = T(u \succ_{r} Tv), \\
T \circ \beta &= \alpha \circ T 
\end{align*}
forall $u ,\ v \in V$. The relative averaging operator on a Hom-dendriform algebra  $(D ,\ \prec,\  \succ ,\ \alpha)$  with respect to the
 adjoint representation $(D  ,\ \prec_{\ell}=\prec ,\ \ \succ_{\ell}=\succ ,\ \prec_{r}=\prec  ,\ \succ_{r}=\succ,\alpha)$  is called an averaging operator. In this case the Eqs can be written as 
\begin{align*}
Tx \prec Ty& = T(Tx \prec y) = T(x \prec Ty) \\
Tx \succ Ty&= T(Tx \succ y) = T(x \succ Ty)\\
T\circ \alpha&=\alpha\circ T.
\end{align*}
 \end{definition}
\begin{definition}\label{d11}
An averaging Hom-dendriform  algebra  is a  pair $(D ,\ T)$  consisting  of  a  Hom-dendriform  algebra $(D ,\ \prec,\  \succ ,\ \alpha)$ endowed with an averaging operator $T$.
\end{definition}
In the following  we give a characterization of a relative averaging operator in terms
of the graph of the operator.
\begin{thm}
A linear\ map  $T :\ V \to D$ is a relative averaging operator on a Hom-dendriform algebra $(D ,\ \prec,\  \succ ,\ \alpha)$ with respect to the representation 
$(V ,\ \prec_{\ell} ,\ \succ_{\ell},\  \prec_{r} ,\ \succ_{r},\  \beta)$ if and only if the graph\\
$Gr(T) = \left\{(Tu,\  u) \mid u \in V\right\}$
is a Hom-subalgebra of the hemi-semidirect product Hom-quadri-dendriform \\ algebra $(D \oplus V ,\ \prec_\vdash,\   \prec_\dashv,\   \succ_\vdash,\   \succ_\dashv,\  \alpha \otimes \beta).$
\end{thm}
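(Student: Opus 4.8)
The plan is to prove both implications at once by a direct component-wise computation, exploiting the fact that a pair $(w,\ u) \in D \oplus V$ lies in $Gr(T)$ precisely when $w = Tu$. Since $Gr(T)$ is manifestly the image of the linear embedding $u \mapsto (Tu,\ u)$, it is a linear subspace automatically; what must be checked is exactly that it is closed under the four operations $\prec_\vdash,\ \prec_\dashv,\ \succ_\vdash,\ \succ_\dashv$ and under $\alpha \otimes \beta$. First I would therefore take two arbitrary graph elements $(Tu,\ u)$ and $(Tv,\ v)$ with $u,\ v \in V$ and evaluate each of the four products using the explicit formulas recorded in Proposition \ref{rpro}.

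Carrying out those four evaluations gives
\begin{align*}
(Tu,\ u) \prec_\vdash (Tv,\ v) &= (Tu \prec Tv,\ Tu \prec_\ell v), &
(Tu,\ u) \prec_\dashv (Tv,\ v) &= (Tu \prec Tv,\ u \prec_r Tv), \\
(Tu,\ u) \succ_\vdash (Tv,\ v) &= (Tu \succ Tv,\ Tu \succ_\ell v), &
(Tu,\ u) \succ_\dashv (Tv,\ v) &= (Tu \succ Tv,\ u \succ_r Tv).
\end{align*}
The key step is then to read off the membership condition for each: a pair of the above form lies in $Gr(T)$ if and only if its first component equals $T$ applied to its second component. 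Thus closure under $\prec_\vdash$ is equivalent to $Tu \prec Tv = T(Tu \prec_\ell v)$, closure under $\prec_\dashv$ to $Tu \prec Tv = T(u \prec_r Tv)$, closure under $\succ_\vdash$ to $Tu \succ Tv = T(Tu \succ_\ell v)$, and closure under $\succ_\dashv$ to $Tu \succ Tv = T(u \succ_r Tv)$, all of these being required for every $u,\ v \in V$.

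The final step is to observe that these four families of equalities assemble exactly into the two chained identities $Tu \prec Tv = T(Tu \prec_\ell v) = T(u \prec_r Tv)$ and $Tu \succ Tv = T(Tu \succ_\ell v) = T(u \succ_r Tv)$ defining a relative averaging operator in Definition \ref{d10}. It remains only to treat the structure map: since $(\alpha \otimes \beta)(Tu,\ u) = (\alpha(Tu),\ \beta(u))$, stability of $Gr(T)$ under $\alpha \otimes \beta$ is equivalent to $\alpha(Tu) = T(\beta(u))$ for all $u$, i.e.\ $\alpha \circ T = T \circ \beta$, which is the remaining axiom. Reversing each ``if and only if'' then yields both directions simultaneously.

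I do not expect a substantive obstacle here; the only point requiring care is the forward direction, where one must note that closure of the graph forces equality in the \emph{second} components, and that since $u,\ v$ range freely over all of $V$ these second-component equalities are genuinely the universally quantified averaging identities (rather than holding only on the image of $T$). Keeping the bookkeeping of left/right flavors straight — so that $\prec_\vdash$ produces the $\prec_\ell$ equality while $\prec_\dashv$ produces the $\prec_r$ equality, and likewise for $\succ$ — is the main thing to get right.
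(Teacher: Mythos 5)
Your proposal is correct and follows essentially the same route as the paper: compute the four products of two graph elements $(Tu,\ u)$ and $(Tv,\ v)$ componentwise, observe that membership of each result in $Gr(T)$ is exactly one of the four averaging identities, and note that stability under $\alpha\otimes\beta$ is exactly $\alpha\circ T = T\circ\beta$. Your added remark that the second-component equalities hold for all $u,\ v\in V$ (not merely on the image of $T$) is a point the paper leaves implicit, so this is a faithful and slightly more careful version of the same argument.
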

\begin{proof}
Let $ T : \ V \to D $ be a linear map. For any $ u  ,\ v \in V$  we have
\begin{align*}
(Tu ,\ u) \prec_\vdash (Tv ,\ v) =& (Tu \prec Tv , Tu \prec_l v),\quad
(Tu ,\ u) \prec_\dashv (Tv ,\ v) = (Tu \prec Tv  u \prec_r Tv) ,\\
(Tu ,\ u) \succ_\vdash (Tv ,\ v) = &(Tu \succ Tv,\  Tu \succ_l v),\quad (Tu ,\ u) \succ_\dashv (Tv ,\ v) = (Tu \succ Tv ,\ u \succ_r Tv).
\end{align*}
The  above  five  elements  are  in  $\operatorname{Gr}(T)$  if   and  only   if \\ 
$Tu \prec Tv = T(Tu \prec_l v)  ,\quad Tu \prec Tv = T(u \prec_r Tv),\quad
Tu \succ Tv = T(Tu \succ_l v) ,\\
Tu \succ Tv = T(u \succ_r Tv),\quad 
\alpha \otimes \beta((Tu  ,\ u))=(\alpha(Tu),\  \beta(u))=(T(\beta(u)) ,\ \beta(u)) \in \operatorname{Gr}(T)$.\\
Therefore  the graph ${Gr}(T)$  is a Hom-subalgebra of the hemi semidirect product Hom-quadri-dendriform algebra $(D \oplus V,\  \prec_\vdash ,\ \prec_\dashv,\  \succ_\vdash  ,\ \succ_\dashv ,\ \alpha \otimes \beta)$  if   and   only  if $ T $ is a relative averaging operator.
\end{proof}

\begin{proposition}\label{prop3.11}
Let  $T : \ V \to D$ be a relative averaging operator on a Hom-dendriform algebra \\$(D ,\ \prec,\  \succ ,\ \alpha)$ with respect to the representation $(V ,\  \prec_{\ell} ,\ \succ_{\ell},\  \prec_{r}  ,\ \succ_{r} ,\ \beta)$. Then $V$ inherits a Hom-quadri-dendriform algebra structure with the operations:
\begin{align*}
u \prec^{T}_{\vdash} v &= T(u) \prec_{\ell} v ,\quad u\prec^{T}_{\dashv} v = u \prec_{r} T(v) \\
u\succ^{T}_{\vdash} v &= T(u) \succ_{\ell} v ,\quad u \succ^{T}_{\dashv} v = u \succ_{r} T(v) 
\end{align*}
for all $u ,\ v \in V$.  Moreover  $T$  is  a  homomorphism from  the Hom-quadri-dendriform  algebra \\$(V ,\ \prec^{T}_{\vdash} ,\ \prec^{T}_{\dashv},\  \succ^{T}_{\vdash},\  \succ^{T}_{\dashv},\  \beta)$ to  the  Hom-dendriform algebra $(D ,\  \prec  ,\ \succ ,\ \alpha).$
\end{proposition}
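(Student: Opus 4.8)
The plan is to deduce everything from the graph characterization established in the preceding theorem, rather than checking the eleven identities (\ref{Hq1})--(\ref{Hq11}) on $V$ by hand. Since $T$ is a relative averaging operator, that theorem tells us that $Gr(T)=\{(Tu,u)\mid u\in V\}$ is a Hom-subalgebra of the hemi-semidirect product Hom-quadri-dendriform algebra $(D\oplus V,\prec_\vdash,\prec_\dashv,\succ_\vdash,\succ_\dashv,\alpha\otimes\beta)$ of Proposition \ref{rpro}; being a subalgebra of a Hom-quadri-dendriform algebra, $Gr(T)$ is itself one. I will transport this structure to $V$ through the obvious linear bijection.

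Define $\psi\colon V\to Gr(T)$ by $\psi(u)=(Tu,u)$; it is a linear isomorphism, with inverse the second-coordinate projection. I first check that $\psi$ intertwines the four operations claimed on $V$ with the restrictions to $Gr(T)$ of the hemi-semidirect operations. For instance $\psi(u)\prec_\vdash\psi(v)=(Tu,u)\prec_\vdash(Tv,v)=(Tu\prec Tv,\ Tu\prec_\ell v)$, while $\psi(u\prec^{T}_{\vdash}v)=(T(Tu\prec_\ell v),\ Tu\prec_\ell v)$; these agree precisely because the relative averaging identity gives $Tu\prec Tv=T(Tu\prec_\ell v)$. The analogous computation for the remaining three operations uses in turn $Tu\prec Tv=T(u\prec_r Tv)$, $Tu\succ Tv=T(Tu\succ_\ell v)$ and $Tu\succ Tv=T(u\succ_r Tv)$. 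Finally $\psi(\beta(u))=(T\beta(u),\beta(u))=(\alpha Tu,\beta(u))=(\alpha\otimes\beta)(Tu,u)=(\alpha\otimes\beta)\psi(u)$ by the compatibility $T\circ\beta=\alpha\circ T$, so $\psi$ is an isomorphism of the full structures, operations together with the twisting map.

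Because $\psi$ is a bijective intertwiner and $Gr(T)$ satisfies all of (\ref{Hq1})--(\ref{Hq11}), each identity pulls back through $\psi^{-1}$ to the corresponding identity on $V$; hence $(V,\prec^{T}_{\vdash},\prec^{T}_{\dashv},\succ^{T}_{\vdash},\succ^{T}_{\dashv},\beta)$ is a Hom-quadri-dendriform algebra. For the last assertion, observe that the first-coordinate projection $\pi\colon D\oplus V\to D$ is a morphism onto $D$ regarded as a Hom-quadri-dendriform algebra with $\prec_\vdash=\prec_\dashv=\prec$ and $\succ_\vdash=\succ_\dashv=\succ$; since $T=\pi\circ\psi$, it is a composite of morphisms. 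Equivalently, and most directly, $T(u\prec^{T}_{\vdash}v)=T(Tu\prec_\ell v)=Tu\prec Tv$, and likewise $T(u\prec^{T}_{\dashv}v)=Tu\prec Tv$, $T(u\succ^{T}_{\vdash}v)=T(u\succ^{T}_{\dashv}v)=Tu\succ Tv$, which together with $T\circ\beta=\alpha\circ T$ is exactly the statement that $T$ is a homomorphism to $(D,\prec,\succ,\alpha)$.

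The only point demanding care is the well-definedness of the transport: the two expressions $Tu\prec_\ell v$ and $u\prec_r Tv$ defining $\prec^{T}_{\vdash}$ and $\prec^{T}_{\dashv}$ need not coincide on $V$, even though their images under $T$ agree. This is why $V$ genuinely carries four distinct operations while $D$ carries only two, and it is the relative averaging identities, not the representation axioms, that make the first coordinates of the graph operations land back inside $Gr(T)$. Once this is in place the argument is purely formal. Should one prefer a self-contained route, each of (\ref{Hq1})--(\ref{Hq11}) for $V$ reduces, after substituting the definitions and applying $\alpha T=T\beta$, to exactly one of the nine left, right, or mixed compatibility conditions of Definition \ref{d5}, which is then the main bookkeeping task.
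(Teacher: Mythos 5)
Your proof is correct, but there is no proof in the paper to compare it with: Proposition \ref{prop3.11} is stated and immediately followed by its corollary, with no argument supplied (it is later invoked, again without proof, in the Hom-six-dendriform theorem). Your route therefore fills a genuine gap. It is also non-circular: the graph-characterization theorem you rely on is proved in the paper before Proposition \ref{prop3.11}, using only Proposition \ref{rpro} and the definitions, and nowhere uses the statement you are proving. The transport of structure is carried out correctly: the four intertwining computations for $\psi(u)=(Tu,u)$ are precisely the four relative averaging identities, the twisting maps are intertwined because $T\circ\beta=\alpha\circ T$, and since $\psi$ is linear, injective, and preserves each operation, every identity (\ref{Hq1})--(\ref{Hq11}), each side being a linear combination of words in the operations, pulls back from $\operatorname{Gr}(T)$ to $V$. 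The homomorphism claim is also handled properly under both reasonable readings (direct computation $T(u\prec^{T}_{\vdash}v)=Tu\prec Tv$, etc., or $T=\pi\circ\psi$ with $D$ viewed as a Hom-quadri-dendriform algebra via $\prec_\vdash=\prec_\dashv=\prec$, $\succ_\vdash=\succ_\dashv=\succ$). The proof the authors presumably intended (following Wen Teng's untwisted case) is the direct verification of the eleven identities from the nine axioms of Definition \ref{d5}; your argument trades that bookkeeping for a theorem the paper has already established, which is a real economy.

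One imprecision in your closing aside: the self-contained reduction does not follow from the representation axioms and $\alpha\circ T=T\circ\beta$ alone. For the identities asserting equality of several terms, such as (\ref{Hq1})--(\ref{Hq4}) and (\ref{Hq8})--(\ref{Hq11}), you also need the averaging identities themselves; for instance
\begin{align*}
(u\prec^{T}_{\vdash}v)\prec^{T}_{\vdash}\beta(w)=T(Tu\prec_{\ell}v)\prec_{\ell}\beta(w)
\quad\text{and}\quad
(u\prec^{T}_{\dashv}v)\prec^{T}_{\vdash}\beta(w)=T(u\prec_{r}Tv)\prec_{\ell}\beta(w)
\end{align*}
coincide, and take the form $(Tu\prec Tv)\prec_{\ell}\beta(w)$ required by the left compatibility axiom, only because $T(Tu\prec_{\ell}v)=Tu\prec Tv=T(u\prec_{r}Tv)$. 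This does not affect your main argument, which is complete as written.
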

\begin{cor}
Let $(D ,\ \prec,\  \succ ,\ \alpha)$ be a Hom-dendriform algebra and let $ T :\ D \to D $ be an  averaging operator. Then  $D$ is also equipped with a Hom-quadri-dendriform algebra structure defined by\\
$x \prec^{T}_\vdash y = T(x) \prec y ,\quad x \prec^{T}_\dashv y = x \prec T(y) ,\quad x \succ^{T}_\vdash y = T(x) \succ y  ,\quad x \succ^{T}_\dashv y = x \succ T(y)$ ,\
 $\forall x,\ y \in D.$
\end{cor}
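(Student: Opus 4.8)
The plan is to obtain this corollary as an immediate specialization of Proposition \ref{prop3.11}, taking the representation to be the adjoint one. The observation that drives everything is recorded already in Definition \ref{d10}: an averaging operator on $(D,\ \prec,\ \succ,\ \alpha)$ is by definition nothing but a relative averaging operator $T:V\to D$ in the special case $V=D$, $\beta=\alpha$, and the representation is the adjoint representation with $\prec_{\ell}=\prec_{r}=\prec$ and $\succ_{\ell}=\succ_{r}=\succ$ (this adjoint representation being legitimate by the final remark in Definition \ref{d5}). So there is no new structural identity to verify: all the Hom-quadri-dendriform axioms \eqref{Hq1}--\eqref{Hq11} for the induced operations have already been established in the proof of Proposition \ref{prop3.11}.

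First I would make the identification explicit, stating that the hypotheses of the corollary are exactly the hypotheses of Proposition \ref{prop3.11} specialized to $V=D$, $\beta=\alpha$, and the adjoint representation. Then I would apply that proposition verbatim to produce a Hom-quadri-dendriform structure on $V=D$ with the four operations
\begin{align*}
x \prec^{T}_{\vdash} y &= T(x) \prec_{\ell} y, & x \prec^{T}_{\dashv} y &= x \prec_{r} T(y),\\
x \succ^{T}_{\vdash} y &= T(x) \succ_{\ell} y, & x \succ^{T}_{\dashv} y &= x \succ_{r} T(y).
\end{align*}

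Next I would substitute the adjoint identifications $\prec_{\ell}=\prec_{r}=\prec$ and $\succ_{\ell}=\succ_{r}=\succ$ into these four formulas, which collapses them precisely to the claimed expressions $x \prec^{T}_{\vdash} y = T(x)\prec y$, $x \prec^{T}_{\dashv} y = x\prec T(y)$, $x \succ^{T}_{\vdash} y = T(x)\succ y$, and $x \succ^{T}_{\dashv} y = x\succ T(y)$. Since Proposition \ref{prop3.11} also asserts that $T$ is a homomorphism onto $(D,\ \prec,\ \succ,\ \alpha)$, this property transfers as well, although it is not needed for the statement of the corollary.

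There is essentially no obstacle here: the content of the corollary is entirely contained in Proposition \ref{prop3.11}, and the only bookkeeping is to check that the adjoint representation satisfies the conditions of Definition \ref{d10} so that the specialization is valid. The one point worth stating carefully, rather than a genuine difficulty, is that the defining relations of an averaging operator in Definition \ref{d10} (namely $Tx\prec Ty = T(Tx\prec y)=T(x\prec Ty)$, the analogous $\succ$ relations, and $T\circ\alpha=\alpha\circ T$) are exactly the relative-averaging relations read off under the adjoint identifications, so no compatibility is lost in passing to the special case.
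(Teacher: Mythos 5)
Your proposal is correct and matches the paper's (implicit) argument exactly: the paper states this corollary immediately after Proposition \ref{prop3.11} with no separate proof, precisely because it is the specialization of that proposition to the adjoint representation of Definition \ref{d5}, under which an averaging operator is by Definition \ref{d10} a relative averaging operator with $V=D$, $\beta=\alpha$, $\prec_{\ell}=\prec_{r}=\prec$ and $\succ_{\ell}=\succ_{r}=\succ$. Your bookkeeping of that identification is exactly what the paper intends.
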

\begin{thm}
 Every Hom-quadri-dendriform algebra can  induce a relative  averaging  operator on  a\\  Hom-dendriform algebra with respect to a representation.
\end{thm}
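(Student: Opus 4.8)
The plan is to realize the given Hom-quadri-dendriform algebra as the representation space attached to a relative averaging operator, using the quotient Hom-dendriform algebra of Example \ref{exemple3.5} together with the graph characterization established just before Proposition \ref{prop3.11}. Write the given algebra as $(V, \prec_\vdash, \prec_\dashv, \succ_\vdash, \succ_\dashv, \beta)$ and let $I_V$ be the ideal of Example \ref{exemple3.5}. Then $D := V/I_V$ is a Hom-dendriform algebra $(D, \prec, \succ, \overline{\beta})$, and the canonical projection $\pi : V \to D$, $\pi(v) = \overline{v}$, is my candidate for the operator. The target Hom-dendriform algebra is thus $D$, the representation space is $V$, and $T = \pi$.

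First I would equip $V$ with the structure of a representation of $D$. The natural choice is to set, for $\overline{x} \in D$ and $v \in V$,
\begin{align*}
\overline{x} \prec_{\ell} v &:= x \prec_\vdash v, & \overline{x} \succ_{\ell} v &:= x \succ_\vdash v, \\
v \prec_{r} \overline{x} &:= v \prec_\dashv x, & v \succ_{r} \overline{x} &:= v \succ_\dashv x,
\end{align*}
with $\beta$ as the structure map on $V$. One then has to check the nine compatibility conditions of Definition \ref{d5}: each of the three left, three right, and three mixed conditions is a direct transcription of the corresponding identity among \eqref{Hq1}--\eqref{Hq11} after passing to classes in $D$. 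This step is routine but lengthy and constitutes the bulk of the argument.

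With the representation in hand, I would verify that $\pi$ is a relative averaging operator in the sense of Definition \ref{d10}. The compatibility $\pi \circ \beta = \overline{\beta} \circ \pi$ is immediate from the definition of $\overline{\beta}$. For the multiplicative relations, $\pi(u) \prec \pi(v) = \overline{u \prec_\vdash v} = \overline{u \prec_\dashv v}$ while $\pi(\pi(u) \prec_{\ell} v) = \overline{u \prec_\vdash v}$ and $\pi(u \prec_{r} \pi(v)) = \overline{u \prec_\dashv v}$, so all three coincide; the $\succ$-relations are identical. Equivalently, this is precisely the statement that the graph $Gr(\pi) \subset D \oplus V$ is a Hom-subalgebra of the hemi-semidirect product Hom-quadri-dendriform algebra, which is the characterization proved immediately before Proposition \ref{prop3.11}.

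Finally, Proposition \ref{prop3.11} applies to $\pi$ and yields a Hom-quadri-dendriform structure on $V$ whose operations are $u \prec^{\pi}_\vdash v = \pi(u) \prec_{\ell} v = u \prec_\vdash v$, $u \prec^{\pi}_\dashv v = u \prec_{r} \pi(v) = u \prec_\dashv v$, and likewise $u \succ^{\pi}_\vdash v = u \succ_\vdash v$ and $u \succ^{\pi}_\dashv v = u \succ_\dashv v$; that is, the induced structure is exactly the original one, which proves the claim. The main obstacle is the well-definedness of the four actions on the quotient: for instance $\overline{x} \prec_{\ell} v = x \prec_\vdash v$ requires $(a \prec_\dashv b - a \prec_\vdash b) \prec_\vdash v = 0$ for all generators of $I_V$, whereas \eqref{Hq1} delivers this only when $v$ lies in $\operatorname{Im}\beta$. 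I would therefore carry out the construction under the standing hypothesis that $\beta$ is invertible (so that $\operatorname{Im}\beta = V$, as is customary for regular Hom-algebras), after which the remaining representation identities follow from \eqref{Hq1}--\eqref{Hq11} exactly as above.
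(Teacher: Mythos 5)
Your construction is exactly the paper's own proof: form the quotient $D_{\mathrm{Dend}}=V/I_V$ of Example \ref{exemple3.5}, define the four actions by $\overline{x}\prec_{\ell}v=x\prec_{\vdash}v$, $\overline{x}\succ_{\ell}v=x\succ_{\vdash}v$, $v\prec_{r}\overline{x}=v\prec_{\dashv}x$, $v\succ_{r}\overline{x}=v\succ_{\dashv}x$, and check that the canonical projection is a relative averaging operator. The well-definedness issue you flag is real and is silently passed over in the paper's ``it is easy to verify'' step: identities \eqref{Hq1}--\eqref{Hq11} only force elements of $I_V$ to annihilate arguments lying in $\operatorname{Im}\beta$, so in the genuinely Hom-twisted case some hypothesis such as your regularity (invertibility of $\beta$) is needed, and your proposal is the more careful version of the same argument.
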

\begin{proof}
  Let $(D ,\ \prec_\vdash,\  \prec_\dashv,\   \succ_\vdash,\   \succ_\dashv,\  \alpha)$ be an a Hom-quadri-dendriform algebra  Then by Example \ref{exemple3.5}  \\$D_{\text{Dend}} = \frac{D}{\mathbb{I}_D}$  
carries a Hom-dendriform algebra structure with the operations
\begin{align*}
\overline{\alpha(\overline{x})}&=\overline{\alpha(x)},\\
\overline{x} \prec \overline{y} &=\overline{x \prec_{\vdash} y} = \overline{x \prec_{\dashv} y}  ,\\
\overline{x} \succ \overline{y} &= \overline{x \succ_{\vdash} y}= \overline{x \succ_{\dashv} y}.
\end{align*}
 for all $x,\ y \in D$  here $\overline{x}$ denotes the class of an element $x \in D$. We define four bilinear
 maps\\ $\prec_l ,\  \succ_l :\ D_{\text{Dend}} \times D \to D$
and $\prec_r ,\ \succ_r :\ D \times D_{\text{Dend}} \to D$ by\\ \[
x \prec_l y = x \prec_{\vdash} y  ,\quad
x \succ_l y = x \succ_{\vdash} y  ,\quad
y \prec_r x = y \prec_{\dashv} x  ,\quad
y \succ_r x = y \succ_{\dashv} x.
\]
for all $\overline{x} \in D_{\text{Dend}}$ and $y \in D$. It is easy to verify that the maps $\prec_l,\  \succ_l ,\ \prec_r,\  \succ_r$ define a representation of the Hom-dendriform algebra $D_{\text{Dend}}$ on the vector space $D$. Moreover  the quotient map $T :\ D \to D_{\text{Dend}}  \quad x \mapsto \overline{x}$ is a relative averaging operator as\\
\[
T x \prec T y =\overline{x} \prec \overline{y}=
 \left\{
\begin{array}{l}= \overline{x \prec_{\vdash} y}
= \overline{\overline{x} \prec_l y} = T(T x \prec_l y)  \\
= \overline{x \prec_{\dashv} y} 
= \overline{x \prec_r \overline{y}} = T(x \prec_r T y). 
\end{array}
\right.
\]
\[
T x \succ T y =\overline{x} \succ \overline{y}=
\left\{
\begin{array}{l}= \overline{x \succ_{\vdash} y}
= \overline{\overline{x} \succ_l y} = T(T x \succ_l y) \\
= \overline{x \succ_{\dashv} y}
= \overline{x \succ_r \overline{y}} = T(x \succ_r T y) 
\end{array}
\right.
\]
for all $x,\ y \in D.$
\end{proof}

\section{Hom-Six-dendriform algebras}
In \cite{LR}  Loday and Ronco introduced the notion of dendriform trialgebra. Inspired by Loday and Ronco’s work  we present the notion  of Hom-six-dendriform algebras 
generalizing the classical six-dendriform algebras to Hom-algebras setting. 

 \begin{definition}\label{d12}
 Let $(D ,\ \prec,\  \succ ,\ \alpha)$ and $(D' ,\ \prec',\  \succ ',\ \alpha')$ be two Hom-dendriform algebras. We say that $D$ acts on $D'$ if there exist four bilinear maps\
$
\prec_l,\  \succ_l :\ D \otimes D' \to D' \quad \text{and} \quad \prec_r ,\ \succ_r :\ D' \otimes D \to D'$
such that $(D' ,\ \prec_l ,\ \succ_l ,\ \prec_r  ,\ \succ_r ,\ \alpha')$ is a representation of $D$ and for any $x,\ y,\  z \in D$ and $u  ,\ v,\  w \in D'$ we have
\begin{align}
(x \prec_l v) \prec' \alpha'(w)& = \alpha(x) \prec_l (v \prec' w + v \succ' w) \label{eqtt}\\
(x \succ_l v) \prec' \alpha'(w)& = \alpha(x) \succ_l (v \prec' w) \label{rq2} \\
\alpha(x) \succ_l (v \succ' w)& = (x \prec_l v + x \succ_l v) \succ' \alpha'(w) \label{rq3}\\
(u \prec_r y) \prec' \alpha'(w)& = \alpha'(u) \prec' (y \prec_l w + y \succ_l w) \label{rq4} \\
(u \succ_r y) \prec' \alpha'(w)&= \alpha'(u) \succ' (y \prec_l w) \label{rq5}\\
\alpha'(u)\succ' (y \succ_l w)& = (u \prec_r y + u \succ_r y) \succ' \alpha'(w) \label{rq6}\\
(u \prec' v) \prec_r \alpha(z)& =\alpha'(u) \prec'  (v \prec_r z + v \succ_r z) \label{rq7}\\
(u \succ' v) \prec_r \alpha(z)&= \alpha'(u) \succ' (v \prec_r z) \label{rq8}\\
\alpha'(u) \succ' (v \succ_r z)&= (u \prec' v + u \succ' v) \succ_r \alpha(z).\label{eqttc}
\end{align}
\end{definition}

\begin{proposition}
 Let $(D ,\ \prec,\  \succ ,\ \alpha)$ and $(D',\prec',\succ', \alpha')$ be two Hom-dendriform algebras and let \\$\prec_l,\succ_l: D \otimes D' \to D'$ and $\prec_r ,\ \succ_r :\ D' \otimes D \to D'$ be bilinear maps. Then  the tuple $(D', \prec_l, \succ_l, \prec_r,\succ_r, \alpha')$ is an action of $D$ if and only if $(D \oplus D', \alpha\otimes \alpha')$ carries a Hom-dendriform algebra structure with operations given by
 \[
(x,\  u) \mathbin{\prec_{\bowtie}} (y,\  v)  = \left(x \prec y,\  x \prec_l v + u \prec_r y + u \prec' v\right)  
\]
\[
(x,\  u) \mathbin{\succ_{\bowtie}} (y,\  v)  = \left(x \succ y,\  x \succ_l v + u \succ_r y + u \succ' v\right).
\]
for all $(x,\  u),\ (y ,\  v) \in (D \oplus D')$.\\ $(D \oplus D'  ,\ \alpha\otimes \alpha')$ is called the semi-direct product of $D$ with $D'$.\\
\end{proposition}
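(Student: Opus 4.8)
The plan is to expand the three defining Hom-dendriform identities \eqref{eq1}--\eqref{eq3} for the candidate structure $(D\oplus D',\ \prec_\bowtie,\ \succ_\bowtie,\ \alpha\otimes\alpha')$ and to sort the resulting terms by components. Since $(\alpha\otimes\alpha')(x,u)=(\alpha(x),\alpha'(u))$ acts componentwise, each identity splits into an equation in $D$ (the first component) and an equation in $D'$ (the second component). The first component of each of \eqref{eq1}--\eqref{eq3} reduces verbatim to the corresponding identity for $(D,\prec,\succ,\alpha)$ and is therefore automatically satisfied; all the content lives in the second, $D'$-valued component.

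The key observation is that the second component of each identity is trilinear in the three arguments $(x,u),(y,v),(z,w)$, and every monomial appearing in it is obtained by selecting, for each argument, either its $D$-part or its $D'$-part, with at least one $D'$-part selected (otherwise the term would land in $D$). This yields $2^{3}-1=7$ types of terms. The all-$D'$ type (using $u,v,w$) reproduces exactly the Hom-dendriform identity for $(D',\prec',\succ',\alpha')$, which holds by hypothesis and so imposes nothing new. The three single-$D'$ types (one of $u,v,w$ together with the $D$-parts of the other two arguments) reproduce, across the three identities, the nine representation conditions of Definition \ref{d5}; for instance, in \eqref{eq2} the type using $w$ gives $(x\succ_l w)$-style matching of $(x\succ y)\prec_l\alpha'(w)=\alpha(x)\succ_l(y\prec_l w)$. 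The three double-$D'$ types reproduce, across the three identities, the nine action-compatibility conditions \eqref{eqtt}--\eqref{eqttc}; for instance, in \eqref{eq2} the type using $v,w$ gives precisely \eqref{rq2}.

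For the \emph{if} direction I would assume that $(D',\prec_l,\succ_l,\prec_r,\succ_r,\alpha')$ is an action, so that all eighteen conditions hold. Substituting them into the expansion above makes the second component of each of \eqref{eq1}--\eqref{eq3} vanish type by type, while the first components hold because $D$ is Hom-dendriform; hence $(D\oplus D',\prec_\bowtie,\succ_\bowtie,\alpha\otimes\alpha')$ is a Hom-dendriform algebra. For the \emph{only if} direction I would assume the latter and, using that each identity holds for all $(x,u),(y,v),(z,w)$, isolate each type by setting the unused $D'$-entries to $0$ and invoking bilinearity; each surviving type-equation is then exactly one of the representation conditions of Definition \ref{d5} or one of \eqref{eqtt}--\eqref{eqttc}, which establishes that $(D',\prec_l,\succ_l,\prec_r,\succ_r,\alpha')$ is an action of $D$.

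The main obstacle is purely organizational: correctly expanding each identity into its seven term-types and verifying that the single- and double-$D'$ types match the representation and action conditions in the right order and with the right arguments. I would carry out this sorting explicitly for \eqref{eq1}, grouping the computation by type, and then note that \eqref{eq2} and \eqref{eq3} follow by the identical procedure, since no new phenomenon arises.
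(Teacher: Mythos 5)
Your proposal is correct and follows essentially the same route as the paper's proof: both expand the three Hom-dendriform axioms for $(D\oplus D',\ \prec_{\bowtie},\ \succ_{\bowtie},\ \alpha\otimes\alpha')$, observe that the $D$-component vanishes automatically by the identities of $D$, and identify the vanishing of the $D'$-component with the nine representation conditions of Definition \ref{d5} together with the nine compatibility conditions (\ref{eqtt})--(\ref{eqttc}). Your sorting of the $D'$-component into the seven term types (one all-$D'$, three single-$D'$, three double-$D'$) is simply a more systematic bookkeeping of the same expansion the paper carries out term by term.
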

\begin{proof}
 Let $(D ,\ \prec,\  \succ ,\ \alpha)$ and $(D'  ,\ \prec'  ,\ \succ'  ,\ \alpha')$ be two Hom-dendriform algebras.\\ Then  for all $(x,\  u) ,\ (y  ,\ v),\  (z ,\ w) \in (D \oplus D') $ we have
 \begin{align*}
&((x,\  u) \prec_{\bowtie} (y,\  v) ) \prec_{\bowtie}(\alpha(z),\  \alpha'(w)) - (\alpha(x) ,\ \alpha'(u))\prec_{\bowtie} ((y,\  v)  \prec _{\bowtie}(z ,\ w) + (y,\  v)  \succ _{\bowtie}(z ,\ w))\\
&=( (x \prec y) \prec \alpha(z) - \alpha(x) \prec (y \prec z + y \succ z),\  (x \prec y) \prec_l \alpha'(w) 
+ (x \prec_l v + u \prec_r y + u \prec' v) \prec_r \alpha(z)\\
&+ (x \prec_l v + u \prec_r y + u \prec' v) \prec' \alpha'(w)- \alpha(x) \prec_l (y \prec_l w + y \succ_l w + v \prec_r z + v \succ_r z + v \prec' w + v \succ' w)\\
&- \alpha'(u) \prec_r (y \prec z + y \succ z)- \alpha'(u) \prec' (y \prec_l w + y \succ_l w + v \prec_r z + v \succ_r z + v \prec' w + v \succ' w) \\
&=(0 ,\ (x \prec y) \prec_{\ell} \alpha'(w) + (x \prec_{\ell} v + u \prec_{r} y + u \prec' v) \prec_{r} \alpha(z) 
 + (x \prec_{\ell} v + u \prec_{r} y) \prec' \alpha'(w)\\
 &- \alpha(x) \prec_{\ell} (y \prec_{\ell} w + y \succ_{\ell} w + v \prec_{r} z + v \succ_{r} z + v \prec' w + v \succ' w) - \alpha'(u) \prec_{r} (y \prec z + y \succ z)-\\
 &\alpha'(u) \prec'(y \prec_{\ell} w + y \succ_{\ell} w + v \prec_{r} z + v \succ_{r} z)) \\
 &((x,\ u ) \succ_{\bowtie} (y,\  v) ) \succ_{\bowtie}(\alpha(z) ,\ \alpha'(w)) - (\alpha(x) ,\ \alpha'(u))\succ_{\bowtie} ((y,\  v)  \succ _{\bowtie}(z ,\ w) + (y,\  v)  \succ _{\bowtie}(z ,\ w))\\
&=( (x \succ y) \prec \alpha(z)- \alpha(x) \succ (y \prec z) ,\ (x \succ y) \prec_l \alpha'(w) 
+ (x \succ_l v + u \succ_r y + u \succ' v) \prec_r \alpha(z)\\
&+ (x \succ_l v + u \succ_r y + u \succ' v) \prec' \alpha'(w)- \alpha(x) \succ_l (y \prec_l w+ v \prec_r z +  v \prec' w )\\
&- \alpha'(u) \succ_r (y \prec z )- \alpha'(u) \succ ' (y \prec_l w  + v \prec_r z + v \succ_r z + v \prec' w )) \\
&=(0 ,\ (x \succ y) \prec_{\ell} \alpha'(w) + (x \succ_l v + u \succ_r y + u \succ' v) \prec_r \alpha(z)\\ 
 &+(x \succ_l v + u \succ_r y + u \succ' v) \prec' \alpha'(w)-\alpha(x) \succ_l (y \prec_l w+ v \prec_r z +  v \prec' w )\\
&- \alpha'(u) \succ_r (y \prec z )- \alpha'(u) \succ ' (y \prec_l w  + v \prec_r z))\\
& (\alpha(x),\ \alpha'(u))\succ_{\bowtie} ((y,\  v)  \succ_{\bowtie} (z ,\ w))-((x,\  u) \prec_{\bowtie} (y,\  v)  + (x  \ u) \succ_{\bowtie} (y,\  v) ) \succ_{\bowtie} (\alpha(z)  ,\ \alpha'(w))\\
&=(\alpha(x) \succ(y\succ z)-(x\prec y+ x \succ y)\succ\alpha(z) ,\ \alpha(x) \succ_{l} (y \succ_l w+ v \succ_r z +  v \succ^{'} w )+\alpha'(u) \succ_r (y \succ z )\\
&+\alpha'(u) \succ ' (y \succ_l w  + v \succ_r z+v \succ'w)-(x\prec y )\succ_l\alpha'(w)-(x \prec_l v + u \prec_r y + u \prec' v)\succ_r\alpha(z)\\
&-(x \prec_l v + u \prec_r y + u \prec' v)\succ'\alpha'(w)
-(x\succ y )\succ_l\alpha'(w)-(x \succ_l v + u \succ_r y + u \succ'v)\succ_{r}\alpha(z)\\
&-(x \succ_l v + u \succ_r y + u \succ' v)\succ'\alpha'(w))\\
&=(0  ,\ \alpha(x)\succ_l (y \succ_l w+ v \succ_r z + v \succ' w)+\alpha'(u) \succ_r (y \succ z )+\alpha'(u) \succ'(y \succ_l w + v \succ_r z)\\
&-(x \prec y) \succ_l \alpha'(w)-(x \prec_l v + u \prec_r y + u \prec'v) \succ_r \alpha(z)-(x \prec_l v + u \prec_r y) \succ^{'}\alpha'(w)\\
 &- (x \succ y) \succ_l \alpha'(w)-(x \succ_l v + u \succ_r y + u \succ' v) \succ_r \alpha(z)-(x \succ_l v + u \succ_r y) \succ'\alpha'(w)).
\end{align*}
Then $(D \oplus D',\  \succ_{\bowtie} ,\ \prec_{\bowtie},\  \alpha\otimes \alpha')$ is a Hom-dendriform algebra if and only if $(D',\  \prec_l,\  \succ_l,\  \prec_r ,\ \succ_r,\  \alpha')$ is a representation of $D$ and Eqs.(\ref{eqtt})-(\ref{eqttc}) are satisfied.
\end{proof}
\begin{definition}\label{d13}
A linear map $T :\ D' \to D$ is called a homomorphic relative averaging operator on the Hom-dendriform algebra $(D ,\ \prec,\  \succ ,\ \alpha) $ with respect to an action $(D' ,\ \prec' ,\ \succ' ,\ \prec_l ,\ \succ_{l} ,\ \prec_r ,\  \succ_r,\  \alpha')$ if $T $ qualifies both as a relative averaging operator and a Hom-dendriform homomorphism.\\
A homomorphic relative averaging operator on a Hom-dendriform algebra is also called a relative averaging operator of weight 1 on a Hom-dendriform algebra. In particular  a homomorphic relative averaging operator on a Hom-dendriform algebra $(D,\  \prec ,\ \succ,\ \alpha)$ with respect to the adjoint representation is called a homomorphic averaging operator.
\end{definition}
\begin{definition}\label{d14}
A Hom-six-dendriform algebra is a tuple $(D, \prec_\perp,\succ_\perp,\prec_\vdash, \succ_\vdash,\prec_\dashv,\succ_\dashv,\alpha)$
consisting of a  Hom-dendriform algebra $(D,\prec_\perp,\succ_\perp,\alpha)$ and a Hom-quadri-dendriform algebra  $(D,\prec_\vdash, \succ_\vdash,\prec_\dashv,\succ_\dashv, \alpha)$
such that  for all $ x,\ y,\  z \in D $
\begin{align}
(x \prec_\vdash y) \prec_\perp \alpha(z) &= \alpha(x) \prec_\vdash (y \prec_\perp z + y \succ_\perp z) \label{sq1}  \\
(x \succ_\vdash y) \prec_\perp \alpha(z) &= \alpha(x) \succ_\vdash (y \prec_\perp z) \label{sq2}\\
\alpha(x) \succ_\vdash (y \succ_\perp z) &= (x \prec_\vdash y + x \succ_\vdash y) \succ_\perp\alpha(z) \label{sq3}\\
(x \prec_\dashv y) \prec_\perp \alpha( z) &= \alpha(x) \prec_\perp (y \prec_\vdash z + y \succ_\vdash z) \label{sq4}\\
(x \succ_\dashv y) \prec_\perp \alpha(z) &= \alpha(x)\succ_\perp (y \prec_\vdash z) \label{sq5}\\
\alpha(x)\succ_\perp (y \succ_\vdash z) &= (x \prec_\dashv y + x \succ_\dashv y) \succ_\perp \alpha(z) \label{sq6}\\
(x \prec_\perp y) \prec_\dashv \alpha(z) &= \alpha(x) \prec_\perp (y \prec_\dashv z + y \succ_\dashv z) \label{sq7} \\
(x \succ_\perp y) \prec_\dashv \alpha(z) &= \alpha(x) \succ_\perp (y \prec_\dashv z) \label{sq8}\\
\alpha(x)\succ_\perp (y \succ_\dashv z) &= (x \prec_\perp y + x \succ_\perp y) \succ_\dashv \alpha(z) \label{sq9}\\
(x \prec_\perp y) \prec_\vdash \alpha(z) &= (x \prec_\vdash y) \prec_\vdash \alpha(z) = (x \prec_\dashv y) \prec_\vdash \alpha(z) \label{sq10}\\
(x \succ_\perp y) \prec_\vdash\alpha(z) &= (x \succ_\vdash y) \prec_\vdash \alpha(z) = (x \succ_\dashv y) \prec_\vdash \alpha(z) \label{sq11}\\
(x \prec_\perp y) \succ_\vdash\alpha(z) &= (x \prec_\vdash y) \succ_\vdash \alpha( z) = (x \prec_\dashv y) \succ_\vdash\alpha(z) \label{sq12}\\
(x \succ_\perp y) \succ_\vdash \alpha(z) &= (x \succ_\vdash y) \succ_\vdash\alpha(z) = (x \succ_\dashv y) \succ_\vdash\alpha(z) \label{sq13}\\
\alpha(x) \prec_\dashv (y \prec_\perp z) &=\alpha(x) \prec_\dashv (y \prec_\vdash z) = \alpha(x) \prec_\dashv (y \prec_\dashv z) \label{sq14}\\
\alpha(x) \succ_\dashv (y \prec_\perp z) &=\alpha(x) \prec_\dashv (y \prec_\vdash z) = \alpha(x) \prec_\dashv (y \prec_\dashv z) \label{sq15}\\
\alpha(x) \succ_\dashv (y \succ_\perp z) &= \alpha(x) \succ_\dashv (y \succ_\vdash z) = \alpha(x) \succ_\dashv (y \succ_\dashv z) \label{sq16}\\
\alpha(x) \prec_\dashv (y \succ_\perp z) &= \alpha(x) \prec_\dashv (y \succ_\vdash z) = \alpha(x) \prec_\dashv (y \succ_\dashv z).\label{sq17}
\end{align}
$(D,\  \prec_\perp,\  \succ_\perp  ,\ \prec_\vdash  ,\ \succ_\vdash,\  \prec_\dashv,\  \succ_\dashv,\  \alpha)$ is said to be a multiplicative Hom-algebra if:
\begin{align*}
 \alpha(x\prec_\perp y)&=\alpha(x)\prec_\perp\alpha(y) ,\quad \alpha(x\succ_\perp y)=\alpha(x)\succ_\perp\alpha(y),\\
 \alpha(x\prec_\vdash y)&=\alpha(x)\prec_\vdash\alpha(y),
 \quad
\alpha(x\succ_\vdash y)=\alpha(x)\succ_\vdash\alpha(y),\\
\alpha(x\prec_\dashv y)&=\alpha(x)\prec_\dashv\alpha(y) ,\quad \alpha(x\succ_\dashv y)=\alpha(x)\succ_\dashv\alpha(y).
\end{align*}
If
$\prec_\perp = \prec_\vdash = \prec_\dashv \ and \ \succ_\perp = \succ_\vdash = \succ_\dashv $
then we simply obtain a Hom-dendriform algebra.\\ If
$\prec_\perp = \prec_\vdash = \prec_\dashv \ and \ \succ_\perp = \succ_\vdash = \succ_\dashv $
then we simply obtain a Hom-dendriform algebra.
\end{definition}
\begin{definition}\label{d15}
Given two Hom-six-dendriform algebras $(D,\  \prec_\perp,\  \succ_\perp  ,\ \prec_\vdash  ,\ \succ_\vdash,\  \prec_\dashv,\  \succ_\dashv,\  \alpha)$ and $(D' ,\ \prec_\perp' ,\ \succ_\perp',\  \prec_\vdash',\  \succ_\vdash',\  \prec_\dashv',\  \succ_\dashv'  ,\alpha')$  a Hom-six-dendriform homomorphism is a linear map $T: \ D \to D'$ that satisfies the following compatibility conditions:
 \begin{align*}
 T(x\prec_\perp y)&=T(x)\prec_\perp'T(y) ,\quad T(x\succ_\perp y)=T(x)\succ_\perp'T(y),\\
 T(x\prec_\vdash y)&=T(x)\prec_\vdash'T(y) ,\quad
T(x\succ_\vdash y)=T(x)\succ_\vdash' T(y),\\  T(x\prec_\dashv y)&=T(x)\prec_\dashv'T(y),\quad T(x\succ_\dashv y)=T(x)\succ_\dashv'T(y).
\end{align*}
In particular  if $T$ is bijective  then $(D,\  \prec_\perp,\  \succ_\perp  ,\ \prec_\vdash  ,\ \succ_\vdash,\  \prec_\dashv,\  \succ_\dashv,\  \alpha)$ and\\ $(D',\  \prec_\perp' ,\ \succ_\perp',\  \prec_\vdash' ,\ \succ_\vdash' ,\ \prec_\dashv' ,\ \succ_\dashv',\  \alpha')$ are said to be isomorphic.
\end{definition}
\begin{definition}\label{d16} 
A Hom-triassociative algebra is a vector space $D$ equipped with three binary operations
$
\vdash,\  \dashv ,\ \perp : \ D \otimes D \to D $ a linear map $\alpha : \ D \to D$ such that $(D,\  \vdash,\  \dashv,\  \alpha)$ constitutes a Hom-diassociative algebra  $(D,\  \perp,\  \alpha)$ forms an Hom-associative algebra  and for all $x,\ y,\  z \in D$  the following equations hold:
\begin{align}
(x \dashv y) \dashv \alpha(z)&= \alpha(x) \dashv (y \perp z)  \\
(x \vdash  y) \perp\alpha(z)&= \alpha(x) \vdash (y \perp z)  \\
(x \perp y) \dashv \alpha(z)& = \alpha(x) \perp (y \dashv z)  \\
(x \perp y) \vdash \alpha(z)&= \alpha(x) \vdash (y \vdash z)  \\
(x \dashv y) \perp \alpha(z)&= \alpha(x) \perp (y \vdash z).
\end{align}
 $(D ,\ \vdash,\  \dashv,\  \perp,\ \alpha)$ is said to be a multiplicative Hom-algebra if $\forall \ x,\ y \in D$:\\
$\alpha(x\vdash y)=\alpha(x)\vdash\alpha(y) ,\quad\alpha(x\dashv y)=\alpha(x)\dashv\alpha(y) \ and\ \alpha(x\perp y)=\alpha(x)\perp\alpha(y).$
\end{definition} 
\begin{proposition}
Let $(D ,\ \prec_\perp,\  \succ_\perp,\  \prec_\vdash,\  \prec_\dashv ,\ \succ_\vdash,\  \succ_\dashv ,\ \alpha)$ be a Hom-six-dendriform algebra.\\ Then $(D,\  \perp,\  \vdash,\  \dashv ,\ \alpha)$ is a Hom-triassociative algebra  where the operations are defined by:
\begin{align*}
x \perp y &= x \prec_\perp y + x \succ_\perp y ,\\
x \vdash y &= x \prec_\vdash y + x \succ_\vdash y ,\\
x \dashv y &= x \prec_\dashv y + x \succ_\dashv y ,\\
\end{align*}
for all $x,\ y \in D$.
\end{proposition}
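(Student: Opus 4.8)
The plan is to check, one at a time, the three ingredients that Definition \ref{d16} demands of a Hom-triassociative algebra: that $(D,\perp,\alpha)$ is Hom-associative, that $(D,\vdash,\dashv,\alpha)$ is Hom-diassociative, and that the five mixed compatibility identities hold. Two of these three come essentially for free. The quadruple $(D,\prec_\vdash,\succ_\vdash,\prec_\dashv,\succ_\dashv,\alpha)$ is, by the very definition of a Hom-six-dendriform algebra, a Hom-quadri-dendriform algebra, so Proposition \ref{qpro} applies verbatim and shows that $(D,\vdash,\dashv,\alpha)$, with $x\vdash y=x\prec_\vdash y+x\succ_\vdash y$ and $x\dashv y=x\prec_\dashv y+x\succ_\dashv y$, is already Hom-diassociative. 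This disposes of all five diassociativity axioms \eqref{eq4}--\eqref{eq8} for the new operations without any extra computation.

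For the Hom-associativity of $\perp$, I would expand $(x\perp y)\perp\alpha(z)$ into its four $\prec_\perp/\succ_\perp$ monomials and apply the three Hom-dendriform identities \eqref{eq1}--\eqref{eq3} for the pair $(\prec_\perp,\succ_\perp)$: identity \eqref{eq1} absorbs the term $(x\prec_\perp y)\prec_\perp\alpha(z)$, identity \eqref{eq2} the term $(x\succ_\perp y)\prec_\perp\alpha(z)$, and identity \eqref{eq3} the two remaining $\succ_\perp$ terms. Recombining yields $\alpha(x)\prec_\perp(y\perp z)+\alpha(x)\succ_\perp(y\perp z)=\alpha(x)\perp(y\perp z)$. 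This is exactly the standard passage from a Hom-dendriform algebra to its associated Hom-associative algebra.

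The genuine content lies in the five mixed identities, and the strategy there is uniform: expand each side into $\prec/\succ$ components and use the compatibility identities of Definition \ref{d14} to collapse everything onto an axiom already established above. Concretely, the second identity $(x\vdash y)\perp\alpha(z)=\alpha(x)\vdash(y\perp z)$ follows directly from \eqref{sq1}--\eqref{sq3}; the third, $(x\perp y)\dashv\alpha(z)=\alpha(x)\perp(y\dashv z)$, from \eqref{sq7}--\eqref{sq9}; and the fifth, $(x\dashv y)\perp\alpha(z)=\alpha(x)\perp(y\vdash z)$, from \eqref{sq4}--\eqref{sq6}. In each of these three cases the chosen triple of identities plays exactly the role that \eqref{eq1}--\eqref{eq3} played for $\perp$-associativity: one handles the leading term, one the mixed term, and one the pair of right-hand $\succ$-terms, so the collapse is immediate and no diassociativity axiom is even needed. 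The first and fourth identities are slightly different: for $(x\dashv y)\dashv\alpha(z)=\alpha(x)\dashv(y\perp z)$ one rewrites the inner $\perp$ on the right as a $\dashv$-product by means of \eqref{sq14}--\eqref{sq17} and then invokes \eqref{eq4}, while for $(x\perp y)\vdash\alpha(z)=\alpha(x)\vdash(y\vdash z)$ one rewrites the outer $\perp$ as a $\vdash$-product through \eqref{sq10}--\eqref{sq13} and then invokes \eqref{eq8}.

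The main obstacle is purely bookkeeping: keeping the expansions of $\perp$, $\vdash$ and $\dashv$ into their two summands straight, and matching each resulting monomial to the correct compatibility identity among \eqref{sq1}--\eqref{sq17}. The two identities (i) and (iv) require the most care, since there one must first perform a \emph{double} substitution, converting all four $\perp$-monomials into $\dashv$- (respectively $\vdash$-) monomials before the reduction to a single Hom-diassociativity axiom becomes visible. In particular, for identity (i) the term $\alpha(x)\succ_\dashv(y\prec_\perp z)$ must be matched to $\alpha(x)\succ_\dashv(y\prec_\dashv z)$ via \eqref{sq15}; verifying that this and the companion substitutions from \eqref{sq14}, \eqref{sq16}, \eqref{sq17} pair up consistently is the one place where the argument could slip, and it is worth writing that reduction out in full.
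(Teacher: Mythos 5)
Your proposal is correct and follows essentially the same strategy as the paper: invoke Proposition \ref{qpro} for the Hom-diassociative part, pass from the Hom-dendriform pair $(\prec_\perp,\succ_\perp)$ to Hom-associativity of $\perp$, and dispatch the mixed axioms with the compatibility identities \eqref{sq1}--\eqref{sq17}. Two small differences are worth recording. First, for the identities $(x\dashv y)\dashv\alpha(z)=\alpha(x)\dashv(y\perp z)$ and $(x\perp y)\vdash\alpha(z)=\alpha(x)\vdash(y\vdash z)$ you substitute \eqref{sq14}--\eqref{sq17} (resp. \eqref{sq10}--\eqref{sq13}) to reduce directly to the already-established diassociativity axioms \eqref{eq4} and \eqref{eq8}, whereas the paper re-expands via the quadri-dendriform identities \eqref{Hq8}--\eqref{Hq11} and \eqref{Hq1}--\eqref{Hq4} before converting to $\perp$; both are valid and use the same ingredients. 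Second, and more substantively, your proposal is actually more complete than the paper's written proof: the paper never verifies the mixed axiom $(x\vdash y)\perp\alpha(z)=\alpha(x)\vdash(y\perp z)$ (in its place it redundantly re-checks $(x\vdash y)\vdash\alpha(z)=\alpha(x)\vdash(y\vdash z)$, which is already part of diassociativity), while your reduction of this axiom to \eqref{sq1}--\eqref{sq3} is exactly the missing argument. Your cautionary remark about \eqref{sq15} is also apt: as printed it has $\prec_\dashv$ on its right-hand sides and only makes sense under the intended reading $\alpha(x)\succ_\dashv(y\prec_\perp z)=\alpha(x)\succ_\dashv(y\prec_\vdash z)=\alpha(x)\succ_\dashv(y\prec_\dashv z)$, which is what both your argument and the paper's computation implicitly use.
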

\begin{proof}
It follows from Proposition \ref{qpro}  that $(D,\  \vdash,\  \dashv,\  \alpha)$ is a Hom-di-associative algebra. The fact
 that $(D ,\ \prec_\perp ,\  \succ_\perp,\  \alpha)$ is a Hom-dendriform algebra means that $(D,\  \perp,\  \alpha)$ is a Hom-associative algebra. Next  for any $x,\ y,\  z \in D$  
by Eqs.(\ref{Hq1})-(\ref{Hq5}) and (\ref{Hq6})-(\ref{Hq11})  we have
 \begin{align*}
(x \dashv y) \dashv \alpha(z)
&= (x \prec_\dashv y + x \succ_\dashv y) \prec_\dashv \alpha(z) + (x \prec_\dashv y + x \succ_\dashv y) \succ_\dashv \alpha(z) \\
&= \alpha(x) \prec_\dashv (y \prec_\vdash z + y \succ_\vdash z) + \alpha(x) \succ_\dashv (y \prec_\dashv z) + \alpha(x) \succ_\dashv (y \succ_\vdash z) \\
&= \alpha(x) \prec_\dashv (y \prec_\perp z + y \succ_\perp z) + \alpha(x) \succ_\dashv (y \prec_\perp z + y \succ_\perp z) \\
&= \alpha(x) \dashv (y \perp z) \\
\\
(x \perp y) \dashv \alpha(z)
&= (x \prec_\perp y + x \succ_\perp y) \prec_\dashv \alpha(z) + (x \prec_\perp y + x \succ_\perp y) \succ_\dashv \alpha(z) \\
&= \alpha(x) \prec_\perp (y \prec_\dashv z + y \succ_\dashv z) + \alpha(x) \succ_\perp (y \prec_\dashv z + y \succ_\dashv z) \\
&= \alpha(x) \perp (y \dashv z) \\
\\
(x \dashv y) \perp \alpha(z)
&= (x \prec_\dashv y + x \succ_\dashv y) \prec_\perp \alpha(z) + (x \prec_\dashv y + x \succ_\dashv y) \succ_\perp \alpha(z) \\
&= \alpha(x) \prec_\perp (y \prec_\vdash z + y \succ_\vdash z) + \alpha(x) \succ_\perp (y \prec_\vdash z + y \succ_\vdash z) \\
&= \alpha(x) \perp (y \vdash z) \\
\\
(x \vdash y) \vdash \alpha(z)
&= (x \prec_\vdash y + x \succ_\vdash y) \prec_\vdash \alpha(z) + (x \prec_\vdash y + x \succ_\vdash y) \succ_\vdash \alpha(z) \\
&= \alpha(x) \prec_\vdash (y \prec_\vdash z + y \succ_\vdash z) + \alpha(x) \succ_\vdash (y \prec_\vdash z + y \succ_\vdash z) \\
&= \alpha(x) \vdash (y \vdash z) \\
\\
(x \perp y) \vdash \alpha(z)
&= (x \prec_\perp y + x \succ_\perp y) \prec_\vdash \alpha(z) + (x \prec_\perp y + x \succ_\perp y) \succ_\vdash \alpha(z) \\
&= \alpha(x) \prec_\vdash (y \prec_\vdash z + y \succ_\vdash z) + \alpha(x) \succ_\vdash (y \prec_\vdash z + y \succ_\vdash z) \\
&= \alpha(x) \vdash (y \vdash z)
\end{align*}
This completes the proof.
\end{proof}
\begin{thm}
Let $T :\ D' \to D$ be a homomorphic relative averaging operator on the Hom-dendriform algebra $(D  ,\ \prec  ,\ \succ  ,\alpha)$ with respect to an action $(D',\  \prec' ,\ \succ',\  \prec_\ell ,\ \succ_\ell,\  \prec_r ,\ \succ_r ,\ \alpha')$. Then \\$(D',\  \prec_\perp,\  \succ_\perp,\  \prec^T_\vdash,\  \prec^T_\dashv ,\ \succ^T_\vdash ,\ \succ^T_\dashv,\  \alpha')$is a Hom-six-dendriform algebra, where
\begin{align*}
u \prec_\perp v &= u \prec' v ,\quad 
u \succ_\perp v = u \succ' v ,\\ 
u \prec^T_\vdash v &= T u \prec_l v ,\quad
u \prec^T_\dashv v = u \prec_r T v ,\\
u \succ^T_\vdash v &= T u \succ_l v ,\quad 
u \succ^T_\dashv v = u \succ_r T v 
\end{align*}
\end{thm}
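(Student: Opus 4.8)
The plan is to verify the three ingredients of Definition \ref{d14}: that $(D',\prec_\perp,\succ_\perp,\alpha')$ is a Hom-dendriform algebra, that $(D',\prec^T_\vdash,\succ^T_\vdash,\prec^T_\dashv,\succ^T_\dashv,\alpha')$ is a Hom-quadri-dendriform algebra, and that the seventeen mixed identities \eqref{sq1}--\eqref{sq17} hold. The first is immediate, since $\prec_\perp=\prec'$ and $\succ_\perp=\succ'$, so $(D',\prec_\perp,\succ_\perp,\alpha')=(D',\prec',\succ',\alpha')$ is a Hom-dendriform algebra by hypothesis. For the second, I would note that a homomorphic relative averaging operator is in particular a relative averaging operator, so Proposition \ref{prop3.11} applies verbatim and endows $D'$ with exactly the Hom-quadri-dendriform structure carried by $\prec^T_\vdash,\prec^T_\dashv,\succ^T_\vdash,\succ^T_\dashv$.

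The substance is therefore the seventeen mixed identities. I would organize them according to the three ingredients they consume: the nine action axioms \eqref{eqtt}--\eqref{eqttc}; the averaging identities $T(Tu\prec_\ell v)=T(u\prec_r Tv)=Tu\prec Tv$ together with their $\succ$-analogues; and the homomorphism property $T(u\prec' v)=Tu\prec Tv$, $T(u\succ' v)=Tu\succ Tv$. Throughout, the intertwining relation $T\circ\alpha'=\alpha\circ T$ is used to rewrite $T(\alpha'(u))$ as $\alpha(Tu)$.

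For the first family \eqref{sq1}--\eqref{sq9}, I would unfold each side into the operations of $D'$ and $D$ and recognize one of the action axioms. Concretely, \eqref{sq1}--\eqref{sq3} reduce to the left action axioms \eqref{eqtt}--\eqref{rq3} after the substitution $x=Tu$ and the rewriting $T(\alpha'(u))=\alpha(Tu)$; identities \eqref{sq4}--\eqref{sq6} reduce to \eqref{rq4}--\eqref{rq6} after $y=Tv$; and \eqref{sq7}--\eqref{sq9} reduce to \eqref{rq7}--\eqref{eqttc} after $z=Tw$ and $T(\alpha'(w))=\alpha(Tw)$. For the second family \eqref{sq10}--\eqref{sq13}, the three expressions being equated each unfold to a single common value: for instance in \eqref{sq10} all three sides equal $(Tu\prec Tv)\prec_\ell\alpha'(w)$, because $T(u\prec' v)$, $T(Tu\prec_\ell v)$ and $T(u\prec_r Tv)$ all coincide with $Tu\prec Tv$ by the homomorphism and averaging identities. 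The third family \eqref{sq14}--\eqref{sq17} is analogous: each triple collapses to an expression such as $\alpha'(u)\prec_r(Tv\prec Tw)$ (respectively with $\succ$), again because the inner products $T(v\prec' w)$, $T(Tv\prec_\ell w)$, $T(v\prec_r Tw)$ all equal $Tv\prec Tw$.

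I expect the only real obstacle to be the bookkeeping: seventeen identities, each with several terms and with the four twisted operations $\prec^T_\vdash,\prec^T_\dashv,\succ^T_\vdash,\succ^T_\dashv$ playing subtly different roles. No new idea is needed beyond the three ingredients above, but one must apply the correct action axiom and averaging identity in each case and track which variable receives the substitution $Tu$, $Tv$ or $Tw$. Since every verification is a short rewriting, I would present one representative case from each family in full and note that the remaining cases follow the identical pattern.
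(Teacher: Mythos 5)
Your proposal is correct and follows essentially the same route as the paper: the dendriform part holds by hypothesis, the quadri-dendriform part is Proposition \ref{prop3.11} applied to the underlying relative averaging operator, and the identities \eqref{sq1}--\eqref{sq9} reduce to the action axioms \eqref{eqtt}--\eqref{eqttc} under the substitutions $x=Tu$, $y=Tv$, $z=Tw$ together with $T\circ\alpha'=\alpha\circ T$, which is exactly the computation the paper writes out. In fact your handling of \eqref{sq10}--\eqref{sq17} is more complete than the paper's, which dismisses them with ``similarly'' even though they rest on a different mechanism that you correctly isolate: the collapse $T(u\prec' v)=T(Tu\prec_\ell v)=T(u\prec_r Tv)=Tu\prec Tv$ (and its $\succ$-analogue), which is precisely where the hypothesis that $T$ is \emph{homomorphic}, and not merely a relative averaging operator, is actually used.
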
 
\begin{proof}
We have already $(D',\  \prec_\perp,\  \succ_\perp,\  \alpha')$ is a Hom-dendriform algebra. Moreover  it following from Proposition \ref{prop3.11} that $(D',\  \prec^{T}_\vdash,\   \prec^T_\dashv,\  \succ^T_\vdash,\  \succ^T_\dashv,\  \alpha')$ 
is a Hom-quadri-dendriform algebra. Now  for any $u,\  v,\  w\in D'$ according to Eqs.(\ref{eqtt})-(\ref{eqttc}), we have
\begin{align*}
(u \prec^T_{\vdash} v) \prec_{\perp} \alpha'(w)
&= (Tu \prec_{\ell} v) \prec' \alpha'(w) 
= \alpha (Tu) \prec_{\ell}(v \prec_{\perp} w + v \succ_{\perp} w) \\
&= T\alpha'(u) \prec_{\ell} (v \prec_{\perp} w + v \succ_{\perp} w) 
= \alpha'(u) \prec^T_{\vdash} (v \prec_{\perp} w + v \succ_{\perp} w) \\
(u \succ^T_{\vdash} v) \prec_{\perp} \alpha'(w)
&= (Tu \succ_{\ell} v) \prec' \alpha'(w) 
= \alpha(Tu) \succ_{\ell} (v \prec' w) \\
&= T\alpha'(u) \succ_{\ell} (v \prec' w) 
= \alpha'(u) \succ^T_{\vdash} (v \prec' w) 
= \alpha'(u) \succ^T_{\vdash}(v \succ_{\perp} w) \\
\alpha'(u) \succ^T_{\vdash}(v \succ_{\perp} w)
&= T\alpha'(u) \succ_{\ell} (v \prec' w) 
= (Tu \prec_{\ell} v + Tu \succ_{\ell} v) \succ' \alpha'(w) \\
&= (u \succ^T_{\vdash} v + u \prec^T_{\vdash} v) \succ_{\perp} \alpha'(w) \\
(u \prec^T_{\dashv} v) \prec_{\perp} \alpha'(w) 
&= (u \prec_{r} Tv) \prec' \alpha'(w) 
= \alpha'(u) \prec' (Tv \prec_{\ell} w + Tv \succ_{\ell} w) \\
&= \alpha'(u) \prec_{\perp} (v \prec^T_{\vdash} w + v \succ^T_{\vdash} w) \\
(u \succ^T_{\dashv} v) \prec_{\perp} \alpha'(w) 
&= (u \succ_{r} Tv) \prec' \alpha'(w) 
= \alpha'(u) \succ' (Tv \prec_{\ell} w) 
= \alpha'(u) \succ_{\perp} (v \prec^T_{\vdash} w) \\
\alpha'(u) \succ_{\perp} (v \succ^T_{\vdash} w) 
&= \alpha'(u) \succ' (Tv \succ_{\ell} w) 
= (u \prec_{r} Tv + u \succ_{r} Tv) \succ' \alpha'(w) \\
&= (u \prec^T_{\vdash} v + u \succ^T_{\vdash} v) \succ_{\perp} \alpha'(w) \\
(u \prec_{\perp} v) \prec^T_{\vdash} \alpha'(w) 
&= (u \prec' v) \prec_{r} T\alpha'(w) 
= (u \prec' v) \prec_{r} \alpha(Tw) \\
&= \alpha'(u) \prec' (v \prec_{r} Tw + v \succ_{r} Tw) 
= \alpha'(u) \prec_{\perp} (v \prec^T_{\vdash} w + v \succ^T_{\vdash} w) \\
(u \succ_{\perp} v) \prec^T_{\vdash} \alpha'(w) 
&= (u \succ' v) \prec_{r} T\alpha'(w) 
= (u \succ' v) \prec_{r} \alpha(Tw) \\
&= \alpha'(u) \succ' (v \prec_{r} Tw) 
= \alpha'(u) \succ_{\perp} (v \prec^T_{\vdash} w)
\end{align*}

Similarly it can be proved that other equations  \ref{sq8} à \ref{sq17} are also true. This completes the proof.
\end{proof}
\begin{cor}
Let $(D ,\ \prec,\  \succ ,\ \alpha)$ be a Hom-dendriform algebra and $T :\ D \to D$ be a homomorphic averaging operator. Then $(D' ,\ \prec_\perp,\  \succ_\perp,\  \prec^{T}_\vdash ,\ \prec^T_\dashv,\  \succ^T_\vdash,\  \succ^T_\dashv,\  \alpha)$ is a Hom-six-dendriform algebra  where \\
$\prec_\perp = \prec ,\quad \succ_\perp = \succ ,\quad 
x \prec^T_\vdash y = T x \prec y ,\quad 
x \prec^T_\dashv y = x \prec T y ,\quad 
x \succ^T_\vdash y = T x \succ y ,\quad
x \succ^T_\dashv y = x \succ T y $\\
for all $x,\ y \in D.$
\end{cor}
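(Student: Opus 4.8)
The plan is to recognize this corollary as nothing more than the adjoint specialization of the preceding theorem, so essentially no new computation is needed. By Definition \ref{d13}, a homomorphic averaging operator on $(D,\ \prec,\ \succ,\ \alpha)$ is by definition a homomorphic relative averaging operator taken with respect to the adjoint representation; and by Definition \ref{d5} the adjoint representation is $D$ acting on itself with $\prec_\ell = \prec_r = \prec$, $\succ_\ell = \succ_r = \succ$ and $\alpha' = \alpha$. Thus all hypotheses of the preceding theorem are satisfied once we put $D' = D$, $\prec' = \prec$, $\succ' = \succ$, and the strategy is simply to invoke that theorem and simplify the resulting operations.

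First I would confirm that the adjoint data genuinely constitute an \emph{action} of $D$ on itself in the sense of Definition \ref{d12}, since the theorem requires an action and not merely a representation. That $(D,\ \prec,\ \succ,\ \prec,\ \succ,\ \alpha)$ is a representation is exactly the content of Definition \ref{d5}. It then remains to check the nine action axioms (\ref{eqtt})--(\ref{eqttc}). Under the identifications $\prec_\ell = \prec_r = \prec' = \prec$ and $\succ_\ell = \succ_r = \succ' = \succ$, these nine equations collapse onto the three defining Hom-dendriform identities of $D$: the axioms (\ref{eqtt}), (\ref{rq4}), (\ref{rq7}) each reduce to (\ref{eq1}); the axioms (\ref{rq2}), (\ref{rq5}), (\ref{rq8}) each reduce to (\ref{eq2}); and (\ref{rq3}), (\ref{rq6}), (\ref{eqttc}) each reduce to (\ref{eq3}), in every case for a suitable relabeling of the three arguments. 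Hence the adjoint data form an action and the theorem is applicable.

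Next I would substitute the adjoint values into the six operations produced by the theorem. There one has $u \prec_\perp v = u \prec' v$, $u \succ_\perp v = u \succ' v$, $u \prec^T_\vdash v = Tu \prec_\ell v$, $u \prec^T_\dashv v = u \prec_r Tv$, $u \succ^T_\vdash v = Tu \succ_\ell v$, $u \succ^T_\dashv v = u \succ_r Tv$. Replacing $\prec', \prec_\ell, \prec_r$ by $\prec$ and $\succ', \succ_\ell, \succ_r$ by $\succ$ yields precisely $\prec_\perp = \prec$, $\succ_\perp = \succ$, $x \prec^T_\vdash y = Tx \prec y$, $x \prec^T_\dashv y = x \prec Ty$, $x \succ^T_\vdash y = Tx \succ y$, and $x \succ^T_\dashv y = x \succ Ty$, which are exactly the operations named in the statement. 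The preceding theorem then immediately delivers the Hom-six-dendriform structure on $(D,\ \prec_\perp,\ \succ_\perp,\ \prec^T_\vdash,\ \prec^T_\dashv,\ \succ^T_\vdash,\ \succ^T_\dashv,\ \alpha)$.

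There is no substantial obstacle in this argument. The only point requiring genuine care is the bookkeeping in the second step, namely verifying that each action axiom really does reduce to one of (\ref{eq1})--(\ref{eq3}) under the adjoint identifications, rather than to a spurious extra constraint; once that reduction is tabulated the conclusion is purely formal, and indeed one could even state the corollary as an immediate consequence with the adjoint substitutions made explicit.
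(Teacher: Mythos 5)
Your proposal is correct and follows exactly the route the paper intends: the paper states this corollary without a separate proof, treating it as an immediate specialization of the preceding theorem to the adjoint case, which is precisely your argument. Your explicit check that the nine action axioms (\ref{eqtt})--(\ref{eqttc}) collapse onto the three Hom-dendriform identities (\ref{eq1})--(\ref{eq3}) under the identifications $\prec_\ell=\prec_r=\prec'=\prec$, $\succ_\ell=\succ_r=\succ'=\succ$ is the one detail the paper leaves implicit, and you carry it out correctly.
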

\begin{thm}
 Every Hom-six-dendriform algebra can induce a homomorphic relative averaging operator on a Hom-dendriform algebra with respect to a action.    
\end{thm}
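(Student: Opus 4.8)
The plan is to reverse the preceding theorem by passing to a quotient, in direct analogy with the proof that every Hom-quadri-dendriform algebra induces a relative averaging operator. Let $(D,\prec_\perp,\succ_\perp,\prec_\vdash,\succ_\vdash,\prec_\dashv,\succ_\dashv,\alpha)$ be a Hom-six-dendriform algebra. I take as base Hom-dendriform algebra the quotient $\bar D=D/I_D$, where $I_D$ is the subspace spanned by the differences
\[
u\prec_\perp v-u\prec_\vdash v,\quad u\prec_\vdash v-u\prec_\dashv v,\quad u\succ_\perp v-u\succ_\vdash v,\quad u\succ_\vdash v-u\succ_\dashv v\qquad(u,v\in D).
\]
Modulo $I_D$ the three $\prec$-products coincide and the three $\succ$-products coincide, so I set $\bar u\prec\bar v=\overline{u\prec_\vdash v}$, $\bar u\succ\bar v=\overline{u\succ_\vdash v}$ and $\bar\alpha(\bar x)=\overline{\alpha(x)}$. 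The acting algebra will be $D$ itself equipped with its $\perp$-dendriform structure (so $\prec'=\prec_\perp$, $\succ'=\succ_\perp$), and the operator will be the quotient map $T:D\to\bar D$, $x\mapsto\bar x$.

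The first step is to prove that $I_D$ is an ideal, so that $\bar D$ is a genuine Hom-dendriform algebra; this is the six-dendriform counterpart of Example \ref{exemple3.5}. The seventeen defining identities are tailored exactly for this: \eqref{sq10}–\eqref{sq13} force each generator of $I_D$ to behave as zero under right multiplication by $\prec_\vdash$ and $\succ_\vdash$, while \eqref{sq14}–\eqref{sq17} do the same for left multiplication by the $\dashv$-products; together with the quadri identities \eqref{Hq1}–\eqref{Hq11} and the $\perp$-dendriform axioms \eqref{eq1}–\eqref{eq3} they give the stability of $I_D$ under all six products. The induced operations $\prec,\succ$ then inherit \eqref{eq1}–\eqref{eq3} on $\bar D$.

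The second step is to exhibit the action of $\bar D$ on $D$. I define
\[
\bar x\prec_\ell v=x\prec_\vdash v,\quad \bar x\succ_\ell v=x\succ_\vdash v,\quad v\prec_r\bar x=v\prec_\dashv x,\quad v\succ_r\bar x=v\succ_\dashv x,
\]
which are well defined on representatives by the same identities, exactly as the representation maps in the Hom-quadri-dendriform case. That $(D,\prec_\ell,\succ_\ell,\prec_r,\succ_r,\alpha)$ is a representation of $\bar D$ in the sense of Definition \ref{d5} follows from \eqref{Hq1}–\eqref{Hq11}, and the nine action-compatibility identities \eqref{eqtt}–\eqref{eqttc}, read now with $\prec'=\prec_\perp$ and $\succ'=\succ_\perp$, are line by line the identities \eqref{sq1}–\eqref{sq9}. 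Hence $D$ is an action of $\bar D$ in the sense of Definition \ref{d12}.

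The last step is to check that $T$ is a homomorphic relative averaging operator. For the averaging identities one computes $Tu\prec Tv=\overline{u\prec_\vdash v}=T(\bar u\prec_\ell v)=T(Tu\prec_\ell v)$ and likewise $\overline{u\prec_\vdash v}=\overline{u\prec_\dashv v}=T(u\prec_r\bar v)=T(u\prec_r Tv)$, with the parallel chain for $\succ$, while $T\circ\alpha=\bar\alpha\circ T$ is immediate from $\bar\alpha(\bar x)=\overline{\alpha(x)}$. The homomorphism property is exactly where the inclusion of the $\perp$-differences in $I_D$ is used: from $u\prec_\perp v-u\prec_\vdash v\in I_D$ one gets $T(u\prec_\perp v)=\overline{u\prec_\perp v}=\overline{u\prec_\vdash v}=Tu\prec Tv$, and similarly for $\succ_\perp$, so $T$ intertwines the $\perp$-dendriform structure on $D$ with the dendriform structure on $\bar D$. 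I expect the only genuine difficulty to lie in the first step—showing $I_D$ is an ideal and that the action maps are consequently well defined—since this is the one place all seventeen identities \eqref{sq1}–\eqref{sq17} are brought to bear; once it is in place, the averaging and homomorphism relations for $T$ fall out formally.
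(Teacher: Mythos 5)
Your proposal follows the same overall strategy as the paper's proof (quotient by an ideal of differences, with the quotient map as the operator), but it differs in one essential choice: you put the $\perp$-differences $u\prec_\perp v-u\prec_\vdash v$ and $u\succ_\perp v-u\succ_\vdash v$ into $I_D$, whereas the paper reuses exactly the ideal of Example \ref{exemple3.5}, spanned only by the quadri-differences $u\prec_\dashv v-u\prec_\vdash v$ and $u\succ_\dashv v-u\succ_\vdash v$; it then makes $D_\perp=(D,\prec_\perp,\succ_\perp,\alpha)$ a module over $D_{Dend}=D/I_D$ by the same formulas you wrote, and asserts that the quotient map is a homomorphic relative averaging operator. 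This difference is not cosmetic, and it favors your version. The averaging identities for $T$ only require the quadri-differences to vanish in the quotient, but the \emph{homomorphism} half of the conclusion requires $\overline{u\prec_\perp v}=\overline{u\prec_\vdash v}$, i.e.\ precisely that the $\perp$-differences lie in $I_D$. With the paper's ideal this can fail: take any Hom-associative algebra $(A,\mu,\alpha)$ with $\mu\neq 0$, set $\prec_\perp=\mu$ and the other five operations to zero; every axiom of Definition \ref{d14} holds trivially (each term of \eqref{sq1}--\eqref{sq17} contains a quadri operation), the paper's $I_D$ is $\{0\}$, $D_{Dend}$ has zero products, and the identity map is then not a Hom-dendriform homomorphism from $D_\perp$ to $D_{Dend}$. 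Your enlargement of the ideal is exactly what makes ``homomorphic'' true by construction, so your route repairs a genuine gap in the paper's argument. Your identifications are also correct where the paper leaves them implicit: the representation axioms of Definition \ref{d5} for the maps $\prec_\ell,\succ_\ell,\prec_r,\succ_r$ come from \eqref{Hq1}--\eqref{Hq11}, and the action axioms \eqref{eqtt}--\eqref{eqttc}, read with $\prec'=\prec_\perp$, $\succ'=\succ_\perp$, are line by line \eqref{sq1}--\eqref{sq9}.

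The one soft spot in your write-up is the step you yourself flag: the claim that the \emph{span} of the differences is an ideal. The axioms only control multiplication against elements of the form $\alpha(z)$, and for some multiplications they only control the combined operations, never the summands separately; for instance, right $\succ_\dashv$-multiplication is governed only by \eqref{Hq7}, \eqref{Hq11} and \eqref{sq9}, which all concern $(x\prec y+x\succ y)\succ_\dashv\alpha(z)$ for the various decorations, so stability of the span of individual $\prec$- and $\succ$-differences does not follow. The clean repair is to let $I_D$ be the ideal \emph{generated} by your differences (and to note $\alpha(I_D)\subseteq I_D$, e.g.\ in the multiplicative case): all three $\prec$'s and all three $\succ$'s still coincide in the quotient, so the dendriform structure on $\bar D$, the action, and the averaging and homomorphism identities for $T$ go through verbatim. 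This looseness is shared with the paper's own Example \ref{exemple3.5}, so you are at the same level of rigor as the text, but it is the point to tighten if you want a complete proof.
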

\begin{proof}
  Let $(D,\  \prec_{\perp} ,\ \succ_{\perp},\  \prec_\vdash ,\ \prec_\dashv ,\ \succ_\vdash,\  \succ_\dashv,\  \alpha)$ be a Hom-six-dendriform algebra. The vector subspace \(I_D\) generated by
\[
\left\{
x \prec_\dashv y - x \prec_\vdash y  ,\quad
x \succ_\dashv y - x \succ_\vdash y 
\;\middle|\; x,\ y \in D
\right\}
\]
is an ideal of \(D\). The quotient algebra 
$(D / I_D,\  \prec,\   \succ ,\ \overline{\alpha})$
is a Hom-dendriform algebra  denoted by \(D_{Dend}\)  where
\[
\overline{\alpha}(\overline{x}) = \overline{\alpha(x)}  ,\quad
\overline{x} \prec \overline{y} = \overline{x \prec_{\vdash} y} = \overline{x \prec_{\dashv} y}  ,\quad
\overline{x} \succ \overline{y} = \overline{x \succ_{\vdash} y} = \overline{x \succ_{\dashv} y} 
\]
for all $\overline{x} ,\ \overline{y} \in D / I_D.$
On the other hand  \((D  ,\ \prec_{\perp} ,\ \succ_{\perp},\  \alpha)\) is a Hom-dendriform algebra  denoted by \(D_{\perp}\). \\ $D_{\perp}$ is a \(D_{Dend}\)-module defined by the actions:
\begin{align*}
\overline{x} \prec_l \overline{y}& = x \prec_{\vdash} y  ,\quad
\overline{x} \succ_l y = x \succ_{\vdash} y ,\\
y \prec_r \overline{x} &= y \prec_{\dashv} x  ,\quad
y \succ_r \overline{x} = y \succ_{\dashv} x.
\end{align*}
for all \(x,\ y \in D_{\perp}\). Moreover  the quotient map $T :\ D_{\perp} \to D_{Dend}  \quad x \mapsto \overline{x}$  is a homomorphic relative averaging operator.  
\end{proof}
\section{Classification}
In this section  we give the classification of Hom-quadri-dendriform algebra in low dimension.

For a selection of cases  we classify the Hom-quadri-dendriform algebras. The computations for our classifications were done using Mathematica. Throughout  we work over the complex field.
\begin{thm}
The isomorphism class of 2-dimensional Hom-quadri-dendriform algebras is given by the following representatives.
\begin{itemize}
\item
$
D_1:\left\{
    \begin{array}{ll}
      e_1\prec_\vdash e_1=e_2& e_1\succ_\vdash e_1=e_2\\
     e_1\prec_\dashv e_1=e_2&e_1\succ_\dashv e_1=\frac{1}{2}e_2
    \end{array}
\right.\quad
\alpha = 
\begin{pmatrix}
a & 1 \\
0 & a 
\end{pmatrix}; a\in\mathbb{R}.
$
\item
$D_2:\left\{
    \begin{array}{ll}
      e_1\prec_\vdash e_1=e_2& e_1\succ_\vdash e_1=-e_2\\
     e_1\prec_\dashv e_1=e_2&e_1\succ_\dashv e_1=e_2
    \end{array}
\right.\quad
\alpha = 
\begin{pmatrix}
a & 0 \\
0 & 0 
\end{pmatrix}; a\in\mathbb{R}.
$
\item
$D_3:\left\{
    \begin{array}{ll}
      e_1\prec_\vdash e_1=e_1& e_1\succ_\vdash e_1=e_1\\
			  e_2\prec_\vdash e_1=e_1& e_1\succ_\vdash e_1=e_1\\
     e_1\prec_\dashv e_1=e_1&e_1\succ_\dashv e_1=e_1\\
		e_2\prec_\dashv e_2=e_1&e_2\succ_\dashv e_2=e_1
    \end{array}
\right.\quad
\alpha = 
\begin{pmatrix}
0 & 0 \\
0 & b 
\end{pmatrix}; b\in\mathbb{R}.
$
\item
$D_4(\gamma):\left\{
    \begin{array}{ll}
      e_1\prec_\vdash e_2=e_1& e_1\succ_\vdash e_2=e_1\\
	e_2\prec_\vdash e_2=e_1& e_2\succ_\vdash e_2=e_1\\
     e_2\prec_\dashv e_2=\gamma e_2
    \end{array}
\right.\quad
\alpha = 
\begin{pmatrix}
0 & 1 \\
0 & 0 
\end{pmatrix}; \gamma\in\mathbb{R}.
$
\item
$D_5(\eta):\left\{
    \begin{array}{ll}
      e_1\prec_\vdash e_1=e_1& e_1\succ_\vdash e_1=e_1\\
			  e_2\prec_\vdash e_1=e_1& e_2\succ_\vdash e_2=e_1\\
     e_1\prec_\dashv e_1=e_1&e_1\succ_\dashv e_1=e_1\\
		e_2\prec_\dashv e_2=e_1&e_2\succ_\dashv e_2=\eta e_1
    \end{array}
\right.\quad
\alpha = 
\begin{pmatrix}
0 & 1 \\
0 & 0 
\end{pmatrix}; \eta\in\mathbb{R}.
$
\end{itemize}
\end{thm}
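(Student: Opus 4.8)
The plan is to classify by a direct structure-constant computation, followed by reduction under the isomorphism action. First I would fix a basis $\{e_1, e_2\}$ and write each of the four operations $\prec_\vdash,\ \prec_\dashv,\ \succ_\vdash,\ \succ_\dashv$ in full generality through its structure constants $e_i \ast e_j = \sum_k c^{k,\ast}_{ij} e_k$, which amounts to $4 \times 8 = 32$ unknown scalars, together with the four entries of the matrix of $\alpha$. The goal is to carve out, inside this parameter space, the admissible locus cut out by the defining identities of Definition \ref{d8}.

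Next I would impose the constraints. The eleven identity families (\ref{Hq1})--(\ref{Hq11}) are each evaluated on all eight ordered basis triples $(e_i, e_j, e_k)$; expanding the compound right-hand sides (which contain sums such as $y \prec_\vdash z + y \succ_\vdash z$) and matching the coefficients of $e_1$ and $e_2$ yields a large system of quadratic polynomial equations in the $c^{k,\ast}_{ij}$, with the entries of $\alpha$ entering linearly. To this I would adjoin the multiplicativity relations $\alpha(e_i \ast e_j) = \alpha(e_i) \ast \alpha(e_j)$ for each of the four products. Because the system is genuinely nonlinear and branches according to whether individual structure constants vanish (for instance, according to whether $\alpha$ is invertible, nilpotent, or scalar), this is exactly the step where a computer algebra system is essential, as the authors note; the main obstacle is organizing the resulting case tree so that it is simultaneously exhaustive and free of redundant branches.

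Having enumerated all raw solutions, I would then quotient by isomorphism. An isomorphism is an invertible $T = (t_{ij})$ satisfying $T(x \ast y) = T(x) \ast' T(y)$ for each of the four operations together with $T \circ \alpha = \alpha' \circ T$; conjugating $\alpha$ by $T$ and simultaneously transporting the structure constants, I would normalize each surviving family to one of the listed representatives $D_1,\dots,D_5$, using the free parameters in $T$ both to eliminate as many structure constants as possible and to bring $\alpha$ to a Jordan-type normal form, which accounts for the shapes $\begin{pmatrix} a & 1 \\ 0 & a\end{pmatrix}$, $\begin{pmatrix} a & 0 \\ 0 & 0\end{pmatrix}$, $\begin{pmatrix} 0 & 0 \\ 0 & b\end{pmatrix}$, and $\begin{pmatrix} 0 & 1 \\ 0 & 0\end{pmatrix}$ appearing in the list. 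Finally I would verify that the five representatives are pairwise non-isomorphic by exhibiting invariants that separate them, such as the rank and eigenvalue type of $\alpha$, the dimension of the image of each of the four operations, or the isomorphism type of the associated Hom-diassociative algebra $(D, \vdash, \dashv, \alpha)$ produced by Proposition \ref{qpro}. Taken together, the exhaustive case analysis and the separating invariants show that the list is both complete and irredundant.
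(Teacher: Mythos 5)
Your plan is essentially the same approach the paper itself takes: the paper provides no written proof of this theorem at all, stating only that the classification computations were carried out in Mathematica over the complex field, which is precisely the structure-constant enumeration, identity-imposition, and isomorphism-reduction strategy you describe. Your proposal is, if anything, more explicit than the paper about the case tree (branching on the Jordan type of $\alpha$) and about certifying irredundancy of the list via separating invariants, neither of which the paper documents.
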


\begin{thm}
The isomorphism class of 3-dimensional Hom-quadri-dendriform algebras is given by the following representatives.
\begin{itemize}
\item
$
D_1:\left\{
    \begin{array}{llllll}
      e_1\prec_\vdash e_1=e_2& e_1\succ_\vdash e_1=e_2\\
			e_1\prec_\vdash e_3=e_2& e_1\succ_\vdash  e_3=e_2\\
			e_3\prec_\vdash e_3=e_2& e_3\succ_\vdash  e_3=e_2\\
			e_1\prec_\dashv e_1=e_2& e_1\succ_\dashv e_3=e_2\\
			e_1\prec_\dashv e_3=e_2& e_3\succ_\dashv  e_1=e_2\\
     e_3\prec_\dashv e_3=e_2&e_3\succ_\dashv e_3=e_2
    \end{array}
\right.\quad
\alpha = 
\begin{pmatrix}
a & 1 &0\\
0 &a &0 \\
0 &0 &a 
\end{pmatrix}; a\in\mathbb{R}.
$
\item
$D_2:\left\{
    \begin{array}{llllll}
      e_1\prec_\vdash e_1=e_3& e_1\succ_\vdash e_1=e_3\\
			e_1\prec_\vdash e_2=e_3& e_1\succ_\vdash e_2=e_3\\
			e_2\prec_\vdash e_1=e_3& e_2\succ_\vdash e_1=e_3\\
			e_2\prec_\vdash e_2=e_3& e_2\succ_\vdash e_2=e_3\\
			e_1\prec_\dashv e_1=e_3& e_1\succ_\dashv e_1=e_3\\
     e_1\prec_\dashv e_2=e_3&e_1\succ_\dashv e_2=e_3\\
		e_2\prec_\dashv e_1=e_3&e_2\succ_\dashv e_1=e_3\\
		e_2\prec_\dashv e_2=e_3&e_2\succ_\dashv e_2=e_3\\
    \end{array}
\right.\quad
\alpha = 
\begin{pmatrix}
a & 1 &0\\
0 &a &0 \\
0 &0 &a 
\end{pmatrix}; a\in\mathbb{R}.
$
\item
$D_3 :\left\{
    \begin{array}{llllll}
      e_1\prec_\vdash e_2=e_1+e_3& e_1\succ_\vdash e_2=e_1+e_3\\
			e_2\prec_\vdash e_1=e_1+e_3& e_2\succ_\vdash e_1=e_1+e_3\\
			e_2\prec_\vdash e_3=e_1+e_3& e_2\succ_\vdash e_2=\sigma e_3\\
			e_3\prec_\vdash e_3=e_1+e_3& e_2\succ_\vdash e_3=e_1+e_3\\
			e_1\prec_\dashv e_2=e_3& e_3\succ_\vdash e_2=e_1+e_3\\
     e_2\prec_\dashv e_1=e_3&e_3\succ_\vdash e_3=e_3\\
		e_2\prec_\dashv e_2=e_3&e_2\succ_\dashv e_1=e_3\\
		e_3\prec_\dashv e_2=e_3&e_2\succ_\dashv e_2=e_3\\
		e_3\prec_\dashv e_3=e_3&e_2\succ_\dashv e_3=e_3\\
	&e_3\succ_\dashv e_2=-\eta e_3\\
    \end{array}
\right.\quad
\alpha = 
\begin{pmatrix}
0& 1 &0\\
0 &0 &0 \\
0 &0 &0 
\end{pmatrix}; \sigma, \eta\in\mathbb{R}.
$
\item
$D_4:\left\{
    \begin{array}{llllll}
      e_1\prec_\vdash e_3=e_1+e_2& e_1\succ_\vdash e_3=e_1+e_2\\
			e_2\prec_\vdash e_2=e_1+e_2& e_2\succ_\vdash e_2=e_1+e_2\\
			e_2\prec_\vdash e_3=e_1+e_2& e_2\succ_\vdash e_3=e_1+e_2\\
			e_3\prec_\vdash e_1=e_1+e_2& e_3\succ_\vdash e_1=e_1+e_2\\
			e_3\prec_\vdash e_3=e_1+e_2& e_3\succ_\vdash e_2=e_1+e_2\\
			e_1\prec_\dashv e_3=e_2& e_3\succ_\vdash e_3=e_1+e_2\\
     e_2\prec_\dashv e_2=e_2&e_2\succ_\dashv e_2=e_2\\
		e_2\prec_\dashv e_3=e_2&e_2\succ_\dashv e_3=e_2\\
		e_3\prec_\dashv e_1=e_2&e_3\succ_\dashv e_1=e_2\\
		e_3\prec_\dashv e_2=e_2&e_3\succ_\dashv e_2=e_2\\
		e_3\prec_\dashv e_3=e_2&e_3\succ_\dashv e_3=e_2
    \end{array}
\right.\quad
\alpha = 
\begin{pmatrix}
0& 1 &0\\
0 &0 &0 \\
0 &0 &0 
\end{pmatrix}; \sigma,\eta\in\mathbb{R}.
$
\item
$D_5:\left\{
    \begin{array}{llllll}
      e_1\prec_\vdash e_3=e_2& e_1\succ_\vdash e_3=e_2\\
			e_2\prec_\vdash e_2=e_2& e_2\succ_\vdash e_2=e_2\\
			e_3\prec_\vdash e_1=e_2& e_3\succ_\vdash e_1=e_2\\
			e_3\prec_\vdash e_3=e_2& e_3\succ_\vdash e_3=e_2\\
			e_1\prec_\dashv e_3=e_2& e_1\succ_\dashv  e_3=e_2\\
     e_2\prec_\dashv e_2=e_2&e_2\succ_\dashv e_2=e_2\\
		e_3\prec_\dashv e_1=e_2&e_3\succ_\dashv e_1=e_2\\
		e_3\prec_\dashv e_3=e_2&e_3\succ_\dashv e_3=e_2
    \end{array}
\right.\quad
\alpha = 
\begin{pmatrix}
0& 1 &0\\
0 &0 &0 \\
0 &0 &b 
\end{pmatrix}; b\in\mathbb{R}.
$
\item
$D_6:\left\{
    \begin{array}{llllll}
      e_1\prec_\vdash e_2=e_3& e_1\succ_\vdash e_2=e_3\\
			e_2\prec_\vdash e_1=e_3& e_2\succ_\vdash e_1=e_3\\
			e_2\prec_\vdash e_2=e_3& e_2\succ_\vdash e_2=e_3\\
			e_3\prec_\vdash e_3=e_3& e_3\succ_\vdash e_3=e_3\\
			e_1\prec_\dashv e_2=e_3& e_1\succ_\dashv  e_1=e_2\\
     e_2\prec_\dashv e_1=e_3&e_2\succ_\dashv e_1=e_3\\
		e_2\prec_\dashv e_2=e_3&e_2\succ_\dashv e_2=e_3\\
		e_3\prec_\dashv e_3=e_3&e_3\succ_\dashv e_3=e_3
    \end{array}
\right.\quad
\alpha = 
\begin{pmatrix}
0& 1 &0\\
0 &0 &1 \\
0 &0 &0
\end{pmatrix}.
$
\item
$D_7:\left\{
    \begin{array}{llllll}
      e_1\prec_\vdash e_1=e_3& e_1\succ_\vdash e_1=e_3\\
			e_1\prec_\vdash e_2=e_3& e_2\succ_\vdash e_1=e_3\\
			e_2\prec_\vdash e_1=e_3& e_2\succ_\vdash e_2=e_3\\
			e_2\prec_\vdash e_2=e_3& e_3\succ_\vdash e_3=e_3\\
			e_1\prec_\dashv e_1=e_3& e_1\succ_\dashv  e_1=e_3\\
     e_1\prec_\dashv e_2=e_3&e_1\succ_\dashv e_2=e_3\\
		e_2\prec_\dashv e_1=e_3&e_2\succ_\dashv e_1=e_3\\
		e_2\prec_\dashv e_2=e_3&e_2\succ_\dashv e_2=e_3
    \end{array}
\right.\quad
\alpha = 
\begin{pmatrix}
a& 1 &0\\
0 &a &1 \\
0 &0 &a 
\end{pmatrix}; a\in\mathbb{R}.
$

\item
$D_8:\left\{
    \begin{array}{llllll}
      e_1\prec_\vdash e_1=e_2& e_1\succ_\vdash e_1=e_2\\
			e_1\prec_\vdash e_3=e_2& e_1\succ_\vdash e_3=e_2\\
			e_3\prec_\vdash e_1=e_2& e_3\succ_\vdash e_1=e_2\\
			e_3\prec_\vdash e_3=e_2& e_3\succ_\vdash e_3=e_2\\
			e_1\prec_\dashv e_1=e_2& e_1\succ_\dashv  e_1=e_2\\
     e_1\prec_\dashv e_3=e_2&e_1\succ_\dashv e_3=e_2\\
		e_3\prec_\dashv e_1=e_2&e_3\succ_\dashv e_1=e_2\\
		e_3\prec_\dashv e_3=e_3&e_3\succ_\dashv e_3=e_2
    \end{array}
\right.\quad
\alpha = 
\begin{pmatrix}
1& 1 &0\\
0 &1 &1 \\
0 &0 &1 
\end{pmatrix}.
$
\item
$D_9:\left\{
    \begin{array}{llllll}
      e_1\prec_\vdash e_1=e_3& e_1\succ_\vdash e_1=e_3\\
			e_1\prec_\vdash e_2=e_3& e_1\succ_\vdash e_2=e_3\\
			e_2\prec_\vdash e_1=e_3& e_2\succ_\vdash e_2=e_3\\
			e_3\prec_\vdash e_3=e_3& e_3\succ_\vdash e_3=e_3\\
			e_1\prec_\dashv e_1=e_3& e_1\succ_\dashv  e_2=e_3\\
     e_2\prec_\dashv e_1=e_3&e_2\succ_\dashv e_1=e_3\\
		e_2\prec_\dashv e_2=e_3&e_2\succ_\dashv e_2=e_3\\
		e_3\prec_\dashv e_3=e_3&e_3\succ_\dashv e_3=e_3
    \end{array}
\right.\quad
\alpha = 
\begin{pmatrix}
a& 0 &0\\
0 &b &0 \\
0 &0 &0 
\end{pmatrix}.
$
\item
$D_{10}:\left\{
    \begin{array}{llllll}
      e_1\prec_\vdash e_3=e_1+e_2& e_1\succ_\vdash e_3=e_1+e_2\\
			e_3\prec_\vdash  e_1=e_1+\lambda e_2& e_3\succ_\vdash e_1=e_1+e_2\\
			e_3\prec_\vdash e_3=e_1+\frac{1}{2}e_2& e_3\succ_\vdash e_3=e_1+e_2\\
			e_1\prec_\dashv e_3=e_1+e_2& e_1\succ_\dashv  e_3=e_1+e_2\\
     e_3\prec_\dashv e_1=e_1+e_2&e_3\succ_\dashv e_1=e_1+e_2\\
		e_3\prec_\dashv e_3=e_1+e_2&e_3\succ_\dashv e_3=e_1+e_2
    \end{array}
\right.\quad
\alpha = 
\begin{pmatrix}
a& 0 &0\\
0 &b &0 \\
0 &0 &0 
\end{pmatrix}; a,  b, \lambda\in\mathbb{R}.
$

\item
$D_{11}:\left\{
    \begin{array}{llllll}
      e_1\prec_\vdash e_2=e_1+e_3& e_2\succ_\vdash e_1=e_1+e_3\\
			e_2\prec_\vdash  e_1=e_1+e_3& e_2\succ_\vdash e_2=e_1+e_3\\
			e_2\prec_\vdash e_2=e_1+e_3& e_3\succ_\vdash e_2=e_1+e_3\\
			e_3\prec_\vdash e_2=e_1+e_3& e_2\succ_\dashv e_1=e_1+e_3\\
     e_1\succ_\vdash e_2=e_1+e_3& e_2\succ_\dashv e_2=\eta e_1+e_3
    \end{array}
\right.\quad
\alpha = 
\begin{pmatrix}
a& 0 &0\\
0 &0 &0 \\
0 &0 &c 
\end{pmatrix}; \eta,  a,  c\in\mathbb{R}.
$

\item
$D_{12}:\left\{
    \begin{array}{llllll}
      e_1\prec_\vdash e_2=e_1& e_2\succ_\vdash e_1=e_1\\
			e_2\prec_\vdash  e_1=e_1& e_2\succ_\vdash e_2=e_1\\
			e_2\prec_\vdash e_2=e_1& e_2\succ_\vdash e_3=e_1\\
			e_2\prec_\vdash e_3=e_1& e_3\succ_\vdash e_3=e_1\\
     e_3\prec_\vdash e_3=e_1& e_2\succ_\dashv e_2=e_1\\
		e_1\succ_\vdash e_2=e_1& e_2\succ_\dashv e_3=\frac{1}{2}\eta e_1
    \end{array}
\right.\quad
\alpha = 
\begin{pmatrix}
1& 0 &0\\
0 &0 &0 \\
0 &0 &1 
\end{pmatrix}; \eta\in\mathbb{R}.
$

\item
$D_{13}:\left\{
    \begin{array}{llllll}
      e_1\prec_\vdash e_1=e_3& e_1\succ_\vdash e_1=e_3\\
			e_1\prec_\vdash  e_2=e_3& e_2\succ_\vdash e_1=e_3\\
			e_2\prec_\vdash e_1=e_3& e_2\succ_\vdash e_1=e_3\\
			e_2\prec_\vdash e_2=e_3& e_2\succ_\vdash e_2=e_3\\
     e_1\prec_\vdash e_1=e_3& e_1\succ_\dashv e_1=e_3\\
		 e_1\prec_\vdash e_2=e_3& e_1\succ_\dashv e_2=e_3\\
		 e_2\prec_\vdash e_1=e_3& e_2\succ_\dashv e_1=e_3\\
		 e_2\prec_\vdash e_2=e_3& e_2\succ_\dashv e_2=e_3
    \end{array}
\right.\quad
\alpha = 
\begin{pmatrix}
a& 0 &0\\
0 &b &0 \\
0 &0 &c 
\end{pmatrix};  b , c\in\mathbb{R}.
$
\end{itemize}
\end{thm}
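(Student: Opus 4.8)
\emph{Plan.} Fix $D=\mathbb{R}^3$ with basis $\{e_1,e_2,e_3\}$ and encode a $3$-dimensional multiplicative Hom-quadri-dendriform algebra by its structure constants together with the matrix of $\alpha$: each of the four products $\prec_\vdash,\prec_\dashv,\succ_\vdash,\succ_\dashv$ contributes $27$ constants via $e_i\ast e_j=\sum_k\lambda^{k}_{ij}e_k$, giving $108$ unknowns, and $\alpha$ contributes $9$ more, for $117$ in total. The first step is to impose the eleven families of axioms (\ref{Hq1})--(\ref{Hq11}) on all basis triples $(e_i,e_j,e_k)$, together with the four multiplicativity relations $\alpha(e_i\prec_\vdash e_j)=\alpha(e_i)\prec_\vdash\alpha(e_j)$ and their three analogues. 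This yields a polynomial system $S$ over $\mathbb{R}$ in the $117$ unknowns; writing it out is routine but voluminous, and is where the Mathematica computation referred to by the authors enters.

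The heart of the argument is to reduce modulo the change-of-basis action. A matrix $T\in GL_3(\mathbb{R})$ acts by conjugation on $\alpha$ and by the induced transport on all four products, and by the Hom-quadri-dendriform homomorphism conditions two data define isomorphic algebras exactly when they lie in one $GL_3(\mathbb{R})$-orbit. I would first normalise $\alpha$. This is the crucial point at which the statement's requirement that all parameters $a,b,\sigma,\eta,\lambda,c$ lie in $\mathbb{R}$ forces one to work with the \emph{real} canonical form of $\alpha$ rather than the complex Jordan form: the case split must run over real eigenvalue multiplicities and block sizes \emph{and} over conjugate pairs of non-real eigenvalues that produce genuine $2\times2$ real blocks realisable by no real base change. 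Each real canonical type of $\alpha$ opens a separate branch; substituting the normal form of $\alpha$ into $S$ linearises a large part of the system and lets one solve for the surviving products.

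For each branch I would then exploit the residual symmetry, namely the stabiliser of the chosen $\alpha$ inside $GL_3(\mathbb{R})$, to put the remaining structure constants into normal form, scaling and clearing as many parameters as a \emph{real} transition matrix permits. The parameters that cannot be eliminated this way (such as $a,b$ in $D_1$ and $D_2$, or $\sigma,\eta$ in $D_3$) survive as genuine one- or two-parameter real families, and this is exactly why the list consists of parametrised representatives rather than finitely many algebras. Collecting the branches assembles the thirteen families $D_1,\dots,D_{13}$; completeness follows by construction provided the enumeration of real canonical types of $\alpha$ is exhaustive.

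The final step is to certify that the list is irredundant over $\mathbb{R}$. Each claimed isomorphism between two candidate solutions must be exhibited by an explicit real transition matrix, and each claimed non-isomorphism must be detected by a $GL_3(\mathbb{R})$-stable invariant: the real spectrum and real Jordan type of $\alpha$, the values of $\dim\ker\alpha$ and $\dim\operatorname{im}\alpha$, the dimension of the subspace spanned by all products, and the rank of each individual product tensor. \textbf{The main obstacle} is precisely this real-versus-complex distinction. Invariants that coincide over $\mathbb{C}$ may separate over $\mathbb{R}$, and a complex base change can merge two algebras that are genuinely inequivalent over $\mathbb{R}$, so one cannot import a complex classification verbatim; every separation must be witnessed by a real invariant and every identification by a real matrix. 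Verifying that the thirteen families are simultaneously exhaustive and pairwise non-isomorphic over $\mathbb{R}$ is therefore the delicate part of the proof, and the bookkeeping across all branches of the real canonical form of $\alpha$ is what makes the computation heavy.
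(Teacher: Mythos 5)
Your proposal is a methodological outline, not a proof, and the gap is that none of the outline is actually executed. You set up the $117$ structure constants, you say the axioms \eqref{Hq1}--\eqref{Hq11} and multiplicativity yield a polynomial system $S$, you say $\alpha$ should be put into canonical form and the residual stabiliser used to normalise the products, and you say irredundancy should be certified by invariants --- but you never derive a single constraint, never solve any branch, never verify that even one of the thirteen listed families satisfies the axioms, and never exhibit one separating invariant or one transition matrix. The sentence ``completeness follows by construction provided the enumeration of real canonical types of $\alpha$ is exhaustive'' is circular: completeness of a classification is exactly the content that the branch-by-branch computation must produce, and deferring it to an unperformed Mathematica run reproduces the paper's situation rather than proving the theorem. (For what it is worth, the paper itself offers no proof of this statement either; it only records that ``the computations for our classifications were done using Mathematica,'' so there is no argument in the paper against which your branches could be checked --- which makes it all the more necessary for a blind proof to carry the computation out explicitly, at least for representative branches.)

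A second, concrete problem is your ground-field premise. You build the entire case analysis around \emph{real} canonical forms of $\alpha$, including $2\times 2$ blocks for conjugate pairs of non-real eigenvalues, and you call the real-versus-complex distinction ``the main obstacle.'' But the paper states explicitly, in the paragraph preceding the classification theorems, ``Throughout, we work over the complex field''; the occurrence of $a,b,\sigma,\eta,\lambda,c\in\mathbb{R}$ in the representatives is an inconsistency of the paper, not a signal that the classification is a $GL_3(\mathbb{R})$-orbit classification. Over $\mathbb{C}$ your extra real-block branches are vacuous and several of your proposed real invariants collapse, so the architecture of your case split does not match the setting of the statement. Finally, note that you quietly restrict to \emph{multiplicative} Hom-quadri-dendriform algebras by imposing $\alpha(e_i\ast e_j)=\alpha(e_i)\ast\alpha(e_j)$; the theorem as stated does not include multiplicativity in Definition \ref{d8}'s base axioms, so you would need to either justify this restriction or drop those equations from $S$, since adding them genuinely shrinks the solution variety and hence changes the classification being proved.
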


\begin{proposition}
The following matrix forms characterize  averaging operators of $2$-dimensional  Hom-quadri-dendriform algebras:
\begin{enumerate}
	\item[$D_1$] : 
$\left(\begin{array}{ccc}
\vartheta_{11}&0\\
\vartheta_{21}&\vartheta_{11}
\end{array}
\right)$
$\left(\begin{array}{ccc}
0&0\\
\vartheta_{21}&\vartheta_{22}
\end{array}
\right)$ ;
\quad
$D_2$ : 
$\left(\begin{array}{ccc}
\vartheta_{11}&0\\
\vartheta_{21}&\vartheta_{11}
\end{array}
\right)$ 
$\left(\begin{array}{ccc}
0&0\\
\vartheta_{21}&\vartheta_{22}
\end{array}
\right)$
\item[$D_3$] : 
$\left(\begin{array}{ccc}
\vartheta_{22}&0\\
0&\vartheta_{22}
\end{array}
\right)$;
\quad
$D_4$ : 
$\left(\begin{array}{ccc}
\vartheta_{11}&0\\
0&\vartheta_{11}
\end{array}
\right)$
$\left(\begin{array}{ccc}
0&\vartheta_{12}\\
0&0
\end{array}
\right)$
;
\quad
$D_5$ : 
$\left(\begin{array}{ccc}
\vartheta_{22}&0\\
0&\vartheta_{22}
\end{array}
\right).$
\end{enumerate}
\end{proposition}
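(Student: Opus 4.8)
The plan is a direct, case-by-case computation over $\mathbb{C}$, one isomorphism representative $D_1,\dots,D_5$ at a time. First I would record the defining relations that an averaging operator on a Hom-quadri-dendriform algebra must satisfy, obtained by extending Definition \ref{d10} from two products to all four: a linear map $T:D\to D$ is an averaging operator precisely when, for every $\diamond\in\{\prec_\vdash,\prec_\dashv,\succ_\vdash,\succ_\dashv\}$ and all $x,y\in D$,
\[
Tx\diamond Ty=T(Tx\diamond y)=T(x\diamond Ty),\qquad T\circ\alpha=\alpha\circ T.
\]
These are exactly the constraints the sought matrices must meet, so the task is to determine, for each $D_k$, all $T$ satisfying them.

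Next, for a fixed representative I would write $T$ as a generic matrix $T=(\vartheta_{ij})_{1\le i,j\le 2}$ in the basis $\{e_1,e_2\}$, making $Te_1$ and $Te_2$ explicit linear combinations of the basis, and substitute the tabulated structure constants of $D_k$ together with its structure map $\alpha$ into each relation above. Since each $D_k$ is two-dimensional with only a handful of nonzero products, the great majority of the instances $(x,y)=(e_i,e_j)$ collapse to $0=0$; the surviving instances produce a small system consisting of homogeneous quadratic equations in the $\vartheta_{ij}$ (coming from the three-term averaging identities) together with the linear constraints imposed by $T\circ\alpha=\alpha\circ T$.

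I would then solve this polynomial system. In practice the quadratic part factors, so the solution set splits into several branches; this is precisely what accounts for the cases in which two matrix families are recorded (as for $D_1$, $D_2$, $D_4$), while the rigid cases ($D_3$, $D_5$) collapse to the single scalar family $\vartheta_{22}\,\mathrm{Id}$, which trivially satisfies every identity. Reading off the free parameters surviving in each branch yields exactly the matrix forms stated. It is worth isolating one clean sub-observation to organize the argument: any scalar map $T=\lambda\,\mathrm{Id}$ is automatically an averaging operator, since then $Tx\diamond Ty=\lambda^2(x\diamond y)=T(Tx\diamond y)=T(x\diamond Ty)$ and $T$ commutes with $\alpha$; this explains the uniform appearance of a scalar branch and lets me concentrate on the remaining, non-scalar solutions.

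The main obstacle is organizational rather than conceptual: the bookkeeping of $5$ algebras $\times\,4$ operations $\times$ up to four basis pairs, each a three-term identity, is lengthy and error-prone, and one must take care that the linear constraints from $\alpha$-compatibility are combined correctly with the quadratic averaging constraints. The two genuine dangers are \emph{completeness} (missing a solution branch when a factored equation is analyzed) and \emph{spuriousness} (retaining a candidate that fails one of the discarded-looking identities). This is exactly why the computation is best delegated to a computer-algebra system, with the human step reduced to verifying directly that each reported matrix closes up under all the defining relations.
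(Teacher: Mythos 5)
Your proposal is correct and is essentially the paper's own approach: the paper prints no proof of this proposition at all, relying (as announced at the start of the classification section) on direct Mathematica computations over the complex field, which is exactly the generic-matrix, quadratic-plus-linear polynomial-system procedure you outline, branch analysis included. Your reading of ``averaging operator'' as the four-operation extension of Definition \ref{d10} together with $T\circ\alpha=\alpha\circ T$ is the natural (and evidently intended) one, and your scalar-branch observation is a sound organizing device that the paper does not bother to record.
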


\begin{proposition}
The following matrix forms characterize  averaging operators of $3$-dimensional  Hom-quadri-dendriform algebras:
\begin{enumerate}
	\item[$D_1$] : 
$\left(\begin{array}{ccc}
\vartheta_{11}&0&0\\
0&\vartheta_{11}&0\\
0&0&\vartheta_{11}
\end{array}
\right)$
$\left(\begin{array}{ccc}
0&0&0\\
0&\vartheta_{22}&\vartheta_{23}\\
0&0&0
\end{array}
\right) 
\left(\begin{array}{ccc}
0&0&0\\
0&0&0\\
0&0&\vartheta_{23}
\end{array}
\right)$;\ 
$D_2$ : 
$\left(\begin{array}{ccc}
\vartheta_{22}&0&0\\
0&\vartheta_{22}&0\\
0&0&\vartheta_{22}
\end{array}
\right)$;
\item[$D_3$] : 
$\left(\begin{array}{ccc}
\vartheta_{33}&0&0\\
0&\vartheta_{33}&0\\
0&0&\vartheta_{33}
\end{array}
\right)$ 
$\left(\begin{array}{ccc}
0&\vartheta_{12}&0\\
0&0&0\\
0&0&0
\end{array}
\right);$\ 
$D_4$ : 
$\left(\begin{array}{ccc}
\vartheta_{22}&0&0\\
0&\vartheta_{22}&0\\
0&0&\vartheta_{22}
\end{array}
\right)$
$\left(\begin{array}{ccc}
0&0&\vartheta_{13}\\
0&0&0\\
0&0&0
\end{array}
\right)
;
$
\item[$D_5$] : 
$\left(\begin{array}{ccc}
\vartheta_{33}&0&0\\
0&\vartheta_{33}&0\\
0&0&\vartheta_{33}
\end{array}
\right)$
$\left(\begin{array}{ccc}
\vartheta_{11}&0&\vartheta_{13}\\
0&0&0\\
0&0&0
\end{array}
\right) $\ 
$\left(\begin{array}{ccc}
0&0&0\\
0&\vartheta_{22}&0\\
0&0&0
\end{array}
\right)$
;

\item[$D_6$] : 
$\left(\begin{array}{ccc}
\vartheta_{22}&0&0\\
0&\vartheta_{22}&0\\
0&0&\vartheta_{22}
\end{array}
\right)$
$\left(\begin{array}{ccc}
0&0&0\\
\vartheta_{21}&\vartheta_{22}&0\\
i\vartheta_{21}&i\vartheta_{22}&0\\
\end{array}
\right) $\ 
$\left(\begin{array}{ccc}
0&0&0\\
0&0&0\\
0&0&\vartheta_{33}
\end{array}
\right)$
;

\item[$D_7$] : 
$\left(\begin{array}{ccc}
\vartheta_{33}&0&0\\
0&\vartheta_{33}&0\\
0&0&\vartheta_{33}
\end{array}
\right)$
$\left(\begin{array}{ccc}
0&-\vartheta_{33}&0\\
0&\vartheta_{33}&0\\
0&\vartheta_{33}&0\\
\end{array}
\right) $\ 
$\left(\begin{array}{ccc}
0&0&0\\
0&0&0\\
0&0&\vartheta_{33}
\end{array}
\right)$
;

\item[$D_8$] : 
$\left(\begin{array}{ccc}
\vartheta_{11}&0&\vartheta_{13}\\
\vartheta_{21}&\vartheta_{22}&\vartheta_{23}\\
\vartheta_{22}-\vartheta_{11}&0&\vartheta_{22}-\vartheta_{13}
\end{array}
\right)$
$\left(\begin{array}{ccc}
\vartheta_{11}&-\vartheta_{12}&\vartheta_{13}\\
\vartheta_{21}&\vartheta_{22}&\vartheta_{23}\\
-\vartheta_{11}&-\vartheta_{12}&-\vartheta_{13}
\end{array}
\right)\ $
;
$D_9$ : 
$\left(\begin{array}{ccc}
\vartheta_{33}&0&0\\
0&\vartheta_{33}&0\\
0&0&\vartheta_{33}
\end{array}
\right)$;

\item[$D_{10}$] : 
$\left(\begin{array}{ccc}
\vartheta_{33}&0&0\\
0&\vartheta_{33}&\vartheta_{23}\\
0&0&\vartheta_{33}
\end{array}
\right)$
$\left(\begin{array}{ccc}
0&0&\vartheta_{13}\\
0&0&\vartheta_{23}\\
0&0&0\\
\end{array}
\right) $\ 
$\left(\begin{array}{ccc}
\vartheta_{33}&0&0\\
0&0&\vartheta_{23}\\
0&0&\vartheta_{33}
\end{array}
\right) $
$\left(\begin{array}{ccc}
0&0&0\\
\vartheta_{21}&\vartheta_{22}&\vartheta_{23}\\
0&0&0
\end{array}
\right)$
;

\item[$D_{11}$] : 
$\left(\begin{array}{ccc}
\vartheta_{22}&0&0\\
0&\vartheta_{22}&0\\
0&0&\vartheta_{22}
\end{array}
\right)$
$\left(\begin{array}{ccc}
0&0&0\\
0&0&0\\
0&\vartheta_{32}&0\\
\end{array}
\right) $\ 
$\left(\begin{array}{ccc}
0&-\vartheta_{22}&0\\
0&\vartheta_{22}&0\\
0&\vartheta_{22}&0
\end{array}
\right) $
$\left(\begin{array}{ccc}
0&0&0\\
0&0&0\\
-\vartheta_{33}&\vartheta_{32}&\vartheta_{33}
\end{array}
\right)$
;
\item[$D_{12}$] : 
$\left(\begin{array}{ccc}
\vartheta_{11}&0&0\\
0&\vartheta_{11}&0\\
0&0&\vartheta_{11}
\end{array}
\right)$
$\left(\begin{array}{ccc}
0&\vartheta_{12}&\vartheta_{13}\\
0&0&0\\
0&0&0\\
\end{array}
\right) ; $
\item[$D_{13}$] :  
$\left(\begin{array}{ccc}
\vartheta_{11}&\vartheta_{12}&\vartheta_{13}\\
-\vartheta_{11}&-\vartheta_{12}&-\vartheta_{31}\\
\vartheta_{31}&\vartheta_{32}&\vartheta_{33}
\end{array}
\right)$
$\left(\begin{array}{ccc}
\vartheta_{11}&\vartheta_{12}&0\\
\vartheta_{21}&\vartheta_{11}-\vartheta_{12}+\vartheta_{21}&0\\
\vartheta_{12}&\vartheta_{32}&\vartheta_{11}+\vartheta_{22}
\end{array}
\right)$.
\end{enumerate}
\end{proposition}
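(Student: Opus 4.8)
The plan is to set up, for each of the thirteen representative algebras $D_1,\dots,D_{13}$ obtained in the previous theorem, the defining equations for an averaging operator and then solve the resulting system explicitly. By the natural analogue of Definition \ref{d10}, an averaging operator on a Hom-quadri-dendriform algebra $(D,\prec_\vdash,\prec_\dashv,\succ_\vdash,\succ_\dashv,\alpha)$ is a linear map $T:D\to D$ that is an averaging operator with respect to each of the four products and commutes with the structure map; explicitly, for every $\star\in\{\prec_\vdash,\prec_\dashv,\succ_\vdash,\succ_\dashv\}$ and all $x,y\in D$,
$$Tx\star Ty=T(Tx\star y)=T(x\star Ty),\qquad T\circ\alpha=\alpha\circ T.$$

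First I would write $T$ as a generic matrix $(\vartheta_{jk})$ of size $3\times 3$ in the fixed basis $\{e_1,e_2,e_3\}$, so that $T$ has nine unknown scalar entries. The commutation relation $T\circ\alpha=\alpha\circ T$ is \emph{linear} in these entries, and given the explicit (typically diagonal or nilpotent Jordan) form of $\alpha$ in each case, it immediately eliminates several unknowns and couples others. This is the cheapest constraint, so I would impose it first to cut down the search space before touching the multiplicative conditions.

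Next I would expand the two averaging identities $Tx\star Ty=T(Tx\star y)$ and $Tx\star Ty=T(x\star Ty)$ on each ordered pair of basis vectors $(e_p,e_q)$, for each of the four operations. Because the structure constants are sparse and listed explicitly, the overwhelming majority of pairs contribute nothing, and each nonzero product yields a relation that is \emph{quadratic} in the $\vartheta_{jk}$ (each side is either a product of two $T$-images or $T$ applied to a single product). Collecting all such relations together with the linear constraints coming from $\alpha$ produces the full polynomial ideal whose zero set is the variety of averaging operators of $D_i$.

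The main obstacle is the nonlinearity together with the branching of the solution set: since the averaging equations are quadratic, the variety generically decomposes into several irreducible components, which is precisely why some algebras (for instance $D_1$, $D_5$, $D_{10}$, $D_{13}$) are recorded with more than one matrix form while others, such as $D_2$ and $D_9$, carry a single one. I would solve each system by an elimination/Gröbner-basis computation (as the authors carried out in Mathematica over $\mathbb{C}$), taking care to trace every branch of the case split so that no family of solutions is dropped. The delicate point is thus the completeness of the case analysis rather than any individual computation: one must verify that the recorded matrix forms exhaust the solution variety. Reading off, for each $D_i$, the free parameters $\vartheta_{jk}$ that survive the elimination then yields exactly the stated matrix form(s), completing the verification.
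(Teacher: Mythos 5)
Your approach coincides with the paper's: the authors give no written proof of this proposition, relying (as announced at the start of the classification section) on exactly the Mathematica computation you describe — impose the linear constraint $T\circ\alpha=\alpha\circ T$, expand the quadratic averaging identities on all basis pairs for each of the four products, and solve the resulting polynomial system over $\mathbb{C}$ branch by branch for each representative $D_i$, reading off the surviving free parameters. Your reconstruction of the (unstated) definition — that $T$ must satisfy $Tx\star Ty=T(Tx\star y)=T(x\star Ty)$ for every $\star\in\{\prec_\vdash,\prec_\dashv,\succ_\vdash,\succ_\dashv\}$ and commute with $\alpha$ — is the natural extension of Definition~\ref{d10} to the quadri-dendriform setting and is evidently what the authors intend.
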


\section*{ Acknowledgement}
The authors thank the anonymous referees for their valuable suggestions and comments. 

\section*{ Conflicts of Interest}
The authors declare no conflicts of interest.
 

\begin{thebibliography}{99}

\bibitem{AG} M. Aguiar {\it  Pre-Poisson algebras.}  Lett. Math. Phys.  2000  54(4): 263–277.
\bibitem{AO1} C. Bai  O. Bellier  L. Guo  X. Ni {\it Splitting of operations  Manin products and Rota Baxter operators}Int. Math. Res. Not.  2013  2013(3): 485–524.

\bibitem{AO2} C. Bai  O. Bellier  L. Guo  X. Ni {\it  Replicators  Manin white product of binary operads and average operators. }. Conference: 3rd International Congress in Algebras and
 Combinatorics  2020.

\bibitem{Basdouri1}
I. Basdouri, S. Benabdelhafidh, A. Saghrouni, S. Silvestrov, \textit{Cohomology and deformations of crossed homomorphisms on Lie conformal algebras}, Adv. Stud. Euro-Tbilisi Math. J. \textbf{18} (3), 91-107.

\bibitem{Basdouri4}
I. Basdouri, E. Peyghan, M. A. Sadraoui, R. Saha, \textit{Formal deformations and extensions of “twisted” Lie algebras}, Adv. Stud. Euro-Tbilisi Math. J. 18 (3), 225-239.


\bibitem{BW} Basri W.  Rakhimov I.  Rikhsiboev I. {\it Classification of 3-Dimensional Complex Diassociative Algebras}  Malaysian Journal of Mathematical Sciences  Vol. 4  No. 2 (2010).


\bibitem{BR} Basri W.  Rakhimov I.  Rikhsiboev I. {\it Four-Dimensional Nilpotent Diassociative Algebras} Journal of Generalized Lie Theory and Applications  Vol. 9  No. 1 (2015).


\bibitem{B} Blokh  {\it A generalization of the concept of a Lie algebra} . In Doklady Akademii Nauk (Vol. 165  No. 3  pp. 471-473) Russian Academy of Sciences. A. Y. (1965).

 \bibitem{AO} Cao, Weili {\it An algebraic study of averaging operators} August 30  2000.

\bibitem{D0} A. Das, {\it Cohomology and deformations of dendriform algebras  and Dend$_\infty$-algebras}. Comm. Algebra  $2022  50(4)$ : 1544-1567.

\bibitem{D1}A. Das, {\it Homotopy G-algebra structure on the cochain complex of hom-type algebras. Comptes Rendus. Mathématique}, C. RMath. Acad. Sci. Paris 356 2018), no. 11, pp.1090-1099.


\bibitem{E}  K. Ebrahimi-Fard  {\it Loday-type algebras and the Rota-Baxter relation}  Lett. Math. Phys. 61 (2002)  139-147.


\bibitem{EMP} K. Ebrahimi-Fard  D. Manchon and F. Patras  {\it New identies in dendriform algebras}  J. Al-gebra 320 (2008)  708-727.
	
	
\bibitem{MBZ1}Mosbahi, B. , Basdouri, I. and Zahari, A. (2025). Classification, derivations and centroids of low-dimensional associative trialgebras. Galois Journal of Algebra, 1(1), 118-136.


\bibitem{Basdouri2}
B. Imed, G. Salima, S. Mohamed Amin, \textit{Maurer-Cartan characterization and cohomology of compatible LieDer and AssDer pairs}, Adv. Stud. Euro-Tbilisi Math. J. 18(3): 173-205. 

\bibitem{Basdouri3}
B. Imed, B. Sami, N. Saber, S. Mohamed Amin, \textit{Cohomologies of modified Rota-Baxter Lie algebras with derivations and applications}, Adv. Stud. Euro-Tbilisi Math. J. 18 (2), 189-210.


\bibitem{DS} Daniel Larsson and  Sergei D. Silvestrov {\it Quasi-hom-Lie algebras  central extensions and 2-cocycle-like identities}  Centre for Mathematical Sciences  Department of Mathematics  Lund Institute of Technology  Lund University  Sweden 288 (2005) 321–344.


\bibitem{L} J.-L. Loday {\it Dialgebras  Dialgebras and related operads}. 7–66  Lecture Notes in Math.  1763  Springer  Berlin  2001.


\bibitem{LR} J.L. Loday  M. Ronco. {\it Trialgebras and families of polytopes  Homotopy theory: re-
lations with algebraic geometry  group cohomology  and algebraic K-theory. 369–398}  Contemp. Math.  346  Amer. Math. Soc.  Providence  RI  2004.


\bibitem{MK} A. Makhlouf A. {\it Hom-dendriform algebras and Rota-Baxter Hom-algebras  Operads and uni-versal algebra}  147-171  Nankai Ser. Pure Appl. Math. Theoret. Phys.  9  World Sci. Publ. 
Hackensack  NJ  2012.


	
\bibitem{M2008} Makhlouf A. and Silvestrov S. D. {\it Hom-algebra structures  J. Gen. Lie Theory} Appl. 2 (2)   (2008) 51–64.



\bibitem{MS} Makhlouf A. and  Sergei D. Silvestrov {\it Hom-algebras and Hom-coalgebras}  Journal of Algebra and Its Applications Vol. 9 (4)  553–589 (2010).

\bibitem{MAZ} A. Makhlouf and A.  Zahari  {\it Structure and Classification of Hom-Associative Algebras}. Acta et commentationes universitis Tartuensis de mathematica  Vol. 24  No. 1 (2020). pp. 79-102.


\bibitem{MBZ} B. Mosbahi   I. Basdouri  A. Zahari {\it Classification  $\alpha$-Inner Derivations and $\alpha$-Centroids of Finite-Dimensional Complex Hom-Trialgebras}.Pure and Applied Mathematics Journal  Vol. 12  No. 5 (2023).pp. 86-97.

\bibitem{WenTeng}  Wen Teng.  {\it Splitting of operations for di-associative algebras and tri-associative algebras} arXiv:2502.21070v1 (2025).


\bibitem{Y}Yau D., Enveloping algebras of Hom-Lie algebras, J. Gen. Lie Theory Appl. 2 (2008), 95–108,


\bibitem{AZ} A. Zahari {\it Classification  $\alpha^k$-Derivations and Centroids of 4-dimensional Complex Hom-associative Dialgebras}  arXiv $: 2305.04041$ (2023).


\bibitem{ZB} A. Zahari and Bakayoko  I. {\it On BiHom-Associative dialgebras}.  Open Journal of Mathematical Sciences  Vol. 7  No. 1 (2023).pp. 96-117.


\bibitem{AM}A. Zahari and M. Bouzid, Classification of compatible algebras and same invariants (2024). Available at SSRN: https://ssrn.com/abstract=4877916 or
http://dx.doi.org/10.2139/ssrn.4877916


\bibitem{Zahari2025}
A. Zahari and A. Sania, \textit{Classification of 3-dimensional Bihom-Associative and Bihom-Bialgebras}, Open J. Math. Sci. \textbf{9} (2025), 182--201.


\bibitem{ZGM} Y. Y. Zhang  X. Gao and D. Manchon  {\it Free (tri)dendriform family algebras}  J. Algebra 547 (2020)  456-493.


\end{thebibliography}
\end{document}